\newcommand{\N}{\mathbb{N}}
\newcommand{\R}{{\mathbb{R}}}
\newcommand{\C}{{\mathbb{C}}}
\newcommand{\T}{{\mathcal{T}}}
\newcommand{\dd}{{{\rm d}}}
\newcommand{\ii}{{\rm i}}
\renewcommand{\P}{{\mathcal{P}}}
\newcommand{\PT}{{\mathcal{PT}}}
\renewcommand{\H}{{\mathcal{H}}}
\newcommand{\Dom}{{\rm{Dom}\,}}
\newcommand{\mC}{\mathcal{C}}
\newcommand{\mO}{\mathcal{O}}
\renewcommand{\Im}{\text{\rm Im}\,}
\newcommand{\cf}{\emph{cf.}}
\newcommand{\ie}{{\emph{i.e.}}}
\newcommand{\eg}{{\emph{e.g.}}}
\newcommand{\sgn}{\,\mathrm{sgn}}
\begin{document}


%
\numberwithin{equation}{section}
\theoremstyle{plain}
\newtheorem{define}{Definition}[section]
\newtheorem{theorem}{Theorem}[section]
\newtheorem{lemma}[theorem]{Lemma}
\newtheorem{proposition}[theorem]{Proposition}
\newtheorem{corollary}[theorem]{Corollary}
\renewcommand{\proofname}{Proof}
\theoremstyle{remark}
\newtheorem{remark}{Remark}[section]



\title{\bf On the similarity of Sturm-Liouville operators
with non-Hermitian boundary conditions to
self-adjoint and normal operators}

\author{David Krej\v ci\v r\'ik$^{a,b}$, \
Petr Siegl,$^{a,c,d}$ \ and \ Jakub \v Zelezn\'y$^{a,c}$}
\date{
\small
\emph{
\begin{quote}
\begin{itemize}
\item[$a)$] Department of Theoretical Physics, Nuclear Physics Institute, Czech Academy of Sciences, \v Re\v z, Czech Republic \\
E-mail: david@ujf.cas.cz, siegl@ujf.cas.cz, zelezny@ujf.cas.cz
\item[$b)$] IKERBASQUE, Basque Foundation for Science,
Alameda Urquijo, 36, 5, 48011 Bilbao, Kingdom of Spain
\item[$c)$] Faculty of Nuclear Sciences and Physical Engineering, Czech Technical University in Prague, Prague,  Czech Republic
\item[$d)$] Laboratoire Astroparticule et Cosmologie, Universit\'e Paris 7, Paris, France
\end{itemize}
\end{quote}
}
24 August 2011
}

\maketitle

\begin{abstract}
\noindent
We consider one-dimensional Schr\"odinger-type operators
in a bounded interval with non-self-adjoint Robin-type
boundary conditions. It is well known that such operators
are generically conjugate to normal operators via
a similarity transformation. Motivated by recent interests
in quasi-Hermitian Hamiltonians in quantum mechanics,
we study properties of the transformations in detail.
We show that they can be expressed as the sum of
the identity and an integral Hilbert-Schmidt operator.
In the case of parity and time reversal boundary conditions,
we establish closed integral-type formulae for
the similarity transformations, derive the similar self-adjoint
operator and also find the associated ``charge conjugation'' operator,
which plays the role of fundamental symmetry
in a Krein-space reformulation of the problem.
\bigskip \\
\noindent
{\bf Mathematics Subject Classification (2010)}: \\
Primary: 34B24, 47B40, 34L10 Secondary: 34L40, 34L05, 81Q12. 
\smallskip \\
\noindent
{\bf Keywords}:
Sturm-Liouville operators, non-symmetric Robin boundary conditions,
similarity to normal or self-adjoint operators,
discrete spectral operator, complex symmetric operator,
$\PT$-symmetry, metric operator, $\mathcal{C}$~operator,
Hilbert-Schmidt operators
\end{abstract}


\newpage
\section{Introduction}
Let us consider the m-sectorial realization~$H$
of the second derivative operator
\begin{equation}\label{operator-intro}
  \psi \mapsto -\psi''
\end{equation}
in the Hilbert space $\H := L^2(-a,a)$, with $a>0$,
subjected to separated, Robin-type boundary conditions
\begin{equation}\label{bc-intro}
  \psi'(\pm a) + c_\pm \, \psi(\pm a)=0
\end{equation}
where~$c_{\pm}$ are arbitrary complex numbers.
The operator~$H$ is self-adjoint if, and only if,
the constants~$c_{\pm}$ are real.
The present paper is concerned with the existence and properties
of similarity transformations of~$H$ to a normal
or self-adjoint operator in the non-trivial case
of non-real~$c_\pm$.

The similarity to the normal (respectively, self-adjoint) operator
is understood as the existence of a bounded operator~$\Omega$
with bounded inverse such that
\begin{equation}\label{sim-intro}
  h := \Omega H \Omega^{-1}
\end{equation}
is normal (respectively, self-adjoint).
We remark that this concept is equivalent to the existence of
a topologically equivalent inner product in~$\H$
with respect to which~$H$ is normal (respectively, self-adjoint).
In addition to results on the general structure of
the similarity transformations, modified inner products,
and transformed operators,
we present explicit closed formulae for these objects
in special cases of boundary conditions.

The operators of the type \eqref{operator-intro}--\eqref{bc-intro}
have been studied from many aspects
and there exist a large number of known results;
we particularly mention
the classical monograph of Dunford and Schwartz \cite[Chapter XIX.3]{DS3}.
Recent years brought new motivations and focused attention
to some aspects of the problem which attracted little attention earlier.

As an example, let us mention that one-dimensional Schr\"odinger operators
with non-Hermitian boundary conditions of the type~\eqref{bc-intro}
were used as a model in semiconductor physics
by Kaiser, Neidhardt and Rehberg \cite{Kaiser-2003-252}.
In their paper the imaginary parts of the constants~$c_\pm$ are required to
have opposite signs such that the system is dissipative.
The authors find the characteristic function of the operators,
construct its minimal self-adjoint dilation
and develop the generalized eigenfunction expansion for the dilation.
See also \cite{Kaiser-2002-43,Kaiser-2003-45}
for further generalizations.
Here the main idea of using non-self-adjointness
comes from embedding a quantum-mechanically
described structure into a macroscopic flow
and regarding the system as an open one.

However, the principal motivation of the present work
is the possibility of giving
a direct quantum-mechanical interpretation of non-Hermitian operators
which are similar to self-adjoint ones \cite{Scholtz-1992-213}.
The most recent strong impetus to this point of view
comes from the so-called $\PT$-symmetric quantum mechanics.
Here the reality of the spectrum of a class of non-Hermitian operators
-- caused by certain symmetries rather than self-adjointness --
suggests their potential relevance as quantum-mechanical Hamiltonians;
see the review articles \cite{Bender-2007-70,Mostafazadeh-2010-7}.
It has been confirmed during the last years that it is indeed the case
provided that the similarity transformation to a self-adjoint
operator can be ensured. However, it is a difficult task.

Motivated by the lack of rigorous results,
the authors of \cite{Krejcirik-2006-39} introduced
a simple non-Hermitian $\PT$-symmetric operator
of the type \eqref{operator-intro}--\eqref{bc-intro}
and wrote down a closed formula for the (square of the)
similarity transformation
(see also \cite{Krejcirik-2008-41a,Krejcirik-2010-43}).
Let us also mention that the importance of (not only) $\PT$-symmetric
version of \eqref{operator-intro}--\eqref{bc-intro}
in quantum mechanical scattering has been recently established
in \cite{Hernandez-Coronado-2011-375}.

The present paper can be regarded as a step further.
In addition to considering more general situations
of larger classes of boundary conditions
and similarity to normal operators, we provide an alternative
and more elegant (integral-kernel) formulae for
the similarity transformations in the $\PT$-symmetric situation.
Moreover, we also give a remarkably simple formula
for the similar self-adjoint operator in this case.
Finally, we succeed in finding
the so-called $\mathcal{C}$-operator in a closed form,
which plays the role of fundamental symmetry
in a Krein-space reformulation of the problem.

The paper is organised as follows.
In Section~\ref{sec:prel}
we give a precise definition of the operator~$H$,
summarize its known properties
and recall the general concepts of quasi-Hermitian,
$\PT$-symmetric, and $\mathcal{C}$-symmetric operators.
Our main results about the universal structure
of the similarity transformations
can be found in Section~\ref{sec:gen.res}.
In Section~\ref{sec:examples} we show how these
can be applied to particular ($\PT$-symmetric)
classes of boundary conditions
and we present some explicit constructions of the studied objects.
In Section~\ref{sec:bound.pert} we discuss how the results
can be extended to bounded
and even second-order perturbations of~$H$.
Our final Section~\ref{Sec.end} presents a series
of concluding remarks.
%

\section{Preliminaries}
\label{sec:prel}
%
We start with recalling general properties of~$H$
and concepts of similarity transformations in Hilbert spaces.

\subsection{Definition of the operator \texorpdfstring{$H$}{H}}
%
The standard norm in our Hilbert space
$\H \equiv L^2(-a,a)$ is denoted by $\|\cdot\|$.
The corresponding inner product is denoted by $\langle\cdot,\cdot\rangle$
and it is assumed to be antilinear in the first component.

We consider the m-sectorial realization~$H$
of the operator~\eqref{operator-intro}
subjected to the boundary conditions~\eqref{bc-intro}
as the operator associated on~$\H$
with the quadratic form
\begin{equation}\label{tH.def}
\begin{aligned}
t_H[\psi]&:= \|\psi'\|^2 + c_+ |\psi(a)|^2 - c_- |\psi(-a)|^2,  \\
\Dom(t_H)&:=W^{1,2}(-a,a).
\end{aligned}
\end{equation}
Note that the boundary terms are well defined
because of the embedding of the Sobolev space $W^{1,2}(-a,a)$
in the space of uniformly continuous functions $C^0[-a,a]$.
An elementary idea of the proof of the embedding can be
also used to show that the boundary terms represent
a relatively bounded perturbation of the form
associated with the Neumann Laplacian (\ie, $c_\pm=0$).
Since the Neumann form is clearly non-negative
and closed by definition of the Sobolev space,
we know that~$t_H$ is a closed sectorial form
by a standard perturbative argument \cite[Sec.~VI.1.6]{Kato-1966}.

By the representation theorem \cite[Thm.~VI.2.1]{Kato-1966}
and an elementary version of standard elliptic regularity theory,
it is easy to see that
\begin{equation}\label{H.def}
\begin{aligned}
H \psi & = -\psi'', \\
\Dom(H) & = \big\{ \psi \in W^{2,2}(-a,a): \
\psi'(\pm a) + c_\pm \psi(\pm a)=0 \big\}.
\end{aligned}
\end{equation}
We refer to \cite[Ex. VI.2.16]{Kato-1966} for more details.
The operator definition~\eqref{H.def} gives a precise meaning
to \eqref{operator-intro}--\eqref{bc-intro}.

\subsection{Dirichlet and Neumann boundary conditions}\label{subsec.not}
%
This subsection is mainly intended to collect some
notation we shall use later.

We have already mentioned that the special choice $c_\pm=0$
gives rise to the Neumann Laplacian $-\Delta_N$ on~$\H$.
The Dirichlet Laplacian $-\Delta_D$ on~$\H$ can be considered
as the other extreme case by formally putting $c_\pm=+\infty$.
It is properly defined as the second derivative operator~\eqref{operator-intro}
with the operator domain
$
  \Dom(-\Delta_D) :=
  W^{2,2}(-a,a) \cap W_0^{1,2}(-a,a)
$.

The spectrum of the Dirichlet and Neumann Laplacians
in our one-dimen\-sion\-al situation is well known:
$$
\begin{aligned}
  \sigma(-\Delta_D) &= \{k_n^2\}_{n=1}^{\infty}
  \,, \\
  \sigma(-\Delta_N) &= \{k_n^2\}_{n=0}^{\infty}
  \,,
\end{aligned}
  \qquad\mbox{with}\qquad
  k_n :=\frac{n\pi}{2a}
  \,.
$$
The corresponding eigenfunctions are respectively given by
\begin{equation}\label{not.1}
\begin{aligned}
  \chi_n^D(x) :=\frac{1}{\sqrt{a}} \sin k_n(x+a),
  \qquad
  \chi_n^N(x) :=
  \begin{cases}
    \frac{1}{\sqrt{2a}} & \mbox{if} \ n=0 \,,
    \\
    \frac{1}{\sqrt{a}} \cos k_n(x+a) & \mbox{if} \ n \geq 1 \,.
  \end{cases}
\end{aligned}
\end{equation}
To simplify some expressions in the sequel,
we extend the notation by $\chi_0^D:=0$.

Next we introduce a ``momentum" operator $p$ and its adjoint $p^*$:
\begin{equation}\label{pp*.def}
\begin{aligned}
  p\psi   &:=-\ii \psi', & \quad
  p^*\psi &=-\ii \psi',
  \\
  \Dom(p)   &:= W_0^{1,2}(-a,a), & \quad
  \Dom(p^*) &= W^{1,2}(-a,a).
\end{aligned}
\end{equation}
The following identities hold:
\begin{equation}\label{p.ND.id}
\begin{aligned}
\ii p \chi_n^D & = k_n \chi_n^N, \quad & \ii p^* \chi_n^N & = - k_n \chi_n^D, \\
-\Delta_D  & =  p^*p, \quad &  -\Delta_N  & =  pp^*.
\end{aligned}
\end{equation}

The resolvents $(-\Delta_D-k^2)^{-1}$, $(-\Delta_N-k^2)^{-1}$
act as integral operators with simple kernels (Green's functions)
$\mathcal{G}_D^k$ and $\mathcal{G}_N^k$,
respectively:
\begin{equation}\label{res.DN.ker}
\begin{aligned}
\mathcal{G}_D^k(x,y) &= \frac{- \sin(k(x+a)) \, \sin(k(y-a))}{k \;\! \sin(2ka)} \,,
& x<y \,, &
\\
\mathcal{G}_N^k(x,y) &= \frac{- \cos(k(x+a)) \, \cos(k(y-a))}{k \;\! \sin(2ka)} \,,
& x<y \,, &
\end{aligned}
\end{equation}
with $x,y$ exchanged for $x>y$.
Here the spectral parameter~$k^2$ is supposed to belong to
the resolvent set of the respective operator.

For $k=0$, the kernel of $(-\Delta_D)^{-1}$ simplifies to
\begin{equation}\label{res.D0}
\begin{aligned}
\mathcal{G}_D^0(x,y) &= \frac{(x+a)(a-y)}{2a}\,, & x<y& \,,
\end{aligned}
\\
\end{equation}
with $x,y$ exchanged for $x>y$.
The resolvent of $-\Delta_N$ does not exist for $k=0$, of course,
but one can still introduce the reduced resolvent
$\left(-\Delta_N^{\perp}\right)^{-1}$
of the Neumann Laplacian with respect to the eigenvalue~$0$
(see~\cite[Sec.~III.6.5]{Kato-1966} for the concept
of reduced resolvent).
From the point of view of the spectral theorem:
\begin{equation}
\left(-\Delta_N^{\perp}\right)^{-1}
= \sum_{n=1}^{\infty} \frac{1}{k_n^2} \, \chi_n^N \langle \chi_n^N, \cdot \rangle.
\end{equation}
The corresponding integral kernel $\mathcal{G}_N^{\perp}(x,y)$
can be obtained by taking the limit
$k \to 0$ of the regularized expression
$
  G_N^k(x,y)+k^{-2}\;\!\chi_0^N(x)\chi_0^N(y)
$.
We find
\begin{equation}\label{res.N.r}
\begin{aligned}
\mathcal{G}_N^{\perp}(x,y)
&= \frac{(x+a)^2}{4a} + \frac{(y-a)^2}{4a} - \frac{a}{3}\,,
&  x<y \,, &
\end{aligned}
\end{equation}
with $x,y$ exchanged for $x>y$.

Finally, we introduce operators
\begin{equation}\label{JND.def}
  J^{\iota}:=
  \sum_{n=0}^{\infty} C_n^2 \,
  \chi_n^{\iota} \langle \chi_n^{\iota}, \cdot \rangle\,,
  \qquad
  \iota \in \{D,N\}
  \,,
\end{equation}
where $C_n$ are positive numbers satisfying
\begin{equation}\label{Cn.def}
  0 < m_1 < C_n < m_2 < \infty
\end{equation}
for all $n \geq 0$, with given positive $m_1, m_2$.
The sum in the definition \eqref{JND.def}, as well as
all other analogous expressions in the following,
are understood as limits in the strong sense.

\subsection{General properties of \texorpdfstring{$H$}{H}}
%
Now we are in a position to recall some general properties
of the operator~$H$.
\pagebreak[2]
\begin{proposition}[General known facts]\label{H.spectral}%
\noindent%
\begin{enumerate}%
\item[\emph{(i)}]
$H$ is m-sectorial.
The adjoint operator $H^*$ is obtained by taking
the complex conjugation of $c_{\pm}$
in the boundary conditions~\eqref{bc-intro}.
\item[\emph{(ii)}]
$H$ forms a holomorphic family of operators of type~$(B)$
with respect to the boundary parameters~$c_{\pm}$.
\item[\emph{(iii)}]
The resolvent of~$H$ is a compact operator.
\item[\emph{(iv)}]
$H$ is a discrete spectral operator.
\item[\emph{(v)}]
If all eigenvalues are simple,
then $H$ is similar to a normal operator.
If the spectrum of $H$ is in addition real,
then $H$ is similar to a self-adjoint operator.
\end{enumerate}
\end{proposition}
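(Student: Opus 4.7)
My plan is to address the five assertions in turn, handling (i)--(iii) by direct form theory, citing the Dunford--Schwartz theory for (iv), and deriving (v) from the Riesz-basis characterisation of similarity to diagonal operators.

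For (i), the closedness and sectoriality of the form $t_H$ in~\eqref{tH.def} have been established in the preamble, so Kato's first representation theorem \cite[Thm.~VI.2.1]{Kato-1966} provides an m-sectorial operator; a standard elliptic-regularity argument identifies it with~\eqref{H.def}. To identify $H^*$, I would form the adjoint sesquilinear form $t_H^*[\phi,\psi]:=\overline{t_H[\psi,\phi]}$, observe that the boundary terms carrying $c_\pm$ are conjugated in the process, and apply the representation theorem once more. For (ii), the domain $W^{1,2}(-a,a)$ of $t_H$ does not depend on $(c_+,c_-)$ and the form is linear (in particular, entire analytic) in these parameters; this is exactly Kato's definition of a holomorphic family of type~(B) \cite[Sec.~VII.4.2]{Kato-1966}. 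For (iii), since $\Dom(t_H)=W^{1,2}(-a,a)$ is compactly embedded in $L^2(-a,a)$ by Rellich's theorem, a standard argument based on the boundedness of $(H-\lambda)^{-1}$ from $\H$ into the form domain (equipped with its form-graph norm) yields compactness of the resolvent.

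For (iv), I would invoke the classical results of Dunford and Schwartz. The Robin-type conditions~\eqref{bc-intro} are strongly regular in the Birkhoff sense, so \cite[Chapter~XIX.3]{DS3} directly implies that $H$ is a discrete spectral operator. This is the step I expect to be the \emph{main obstacle} if one wishes to avoid citing the monograph: reproving the completeness of root vectors and the uniform bound on the spectral projectors for such boundary conditions is genuinely non-trivial and is essentially the content of that chapter. Here it is reasonable to quote it.

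For (v), the discrete-spectral property supplied by (iv), combined with the assumed simplicity of all eigenvalues, means precisely that the eigenvectors $\{\psi_n\}$ of $H$ form a Riesz basis of~$\H$. Choose any bounded and boundedly invertible $\Omega\colon\H\to\H$ sending $\{\psi_n\}$ to an orthonormal basis $\{e_n\}$. Then
\[
  \Omega H \Omega^{-1} e_n = \lambda_n e_n
  \qquad (n \in \N),
\]
so $\Omega H \Omega^{-1}$ is diagonal in an orthonormal basis, hence normal with spectrum $\{\lambda_n\}$. If, in addition, all $\lambda_n$ are real, this diagonal operator is self-adjoint, which establishes the two similarity assertions.
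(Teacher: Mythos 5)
Your proposal is correct and, for items (i), (ii), (iv) and (v), follows essentially the paper's own route: (i) via the first representation theorem and the adjoint form $t_H^*$, (ii) via the constancy of the form domain $W^{1,2}(-a,a)$ together with the (linear, hence holomorphic) dependence of $t_H$ on $c_\pm$, (iv) by quoting \cite[Chapter~XIX.3]{DS3} --- the paper does exactly this, and agrees implicitly with your assessment that this is the one genuinely deep input --- and (v) by converting the Riesz basicity of the eigenvectors into conjugacy with a diagonal, hence normal (and, for real spectrum, self-adjoint) operator; the paper only states that (v) ``is a consequence of (iv)'', so your elaboration is a useful addition. The one genuine divergence is (iii): you derive compactness of the resolvent from the compact (Rellich) embedding of the form domain into $L^2(-a,a)$ composed with the boundedness of the resolvent into the form domain, whereas the paper deduces it from the perturbation result \cite[Sec.~VI.3.4]{Kato-1966}, using that the boundary terms are a relatively form-bounded perturbation (with arbitrarily small bound) of the Neumann form, whose associated operator already has compact resolvent. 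Both arguments are standard; yours is more self-contained, the paper's recycles the perturbative setup it already uses for (ii). If you flesh out (v), the only point deserving an explicit sentence is the passage from ``discrete spectral operator with simple eigenvalues'' to ``the suitably normalized eigenvectors form a Riesz basis'': this rests on the uniform boundedness and countable additivity of the spectral projections supplied by the spectral-operator structure of \cite{DS3}, which is also what guarantees that $\Omega H\Omega^{-1}$ is the maximal diagonal operator rather than a proper restriction of it.
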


We have already shown that~$H$ is m-sectorial
as the operator associated with
the closed sectorial form~\eqref{H.def}.
The rest of the claim~(i) follows by the fact that
the adjoint operator is associated
with the adjoint form $t_H^*(\phi,\psi) := \overline{t_H(\psi,\phi)} $
(\cf~\cite[Thm.~VI.2.5]{Kato-1966}).
Property~(ii) follows from~\eqref{H.def} as well
if we recall that the boundary terms represent
a relatively bounded perturbation of the form
associated with the Neumann Laplacian
and the relative bound can be made arbitrarily small
(\cf~\cite[Sec.~VII.4.3]{Kato-1966}).
This also proves~(iii) as a consequence of
the perturbation result \cite[Sec.~VI.3.4]{Kato-1966}.
The proof of~(iv) is contained in \cite[Chapter XIX.3]{DS3}.
Property~(v) is a consequence of~(iv).

The similarity to a normal operator can be equivalently stated
as the Riesz basicity of the eigenvectors of~$H$.
This property is shared by all second derivative operators
with strongly regular boundary conditions, see \cite{Mikhajlov-1962-3}.
Using the notion of spectral operator, this has been investigated
in~\cite{DS3} as well.

Although the eigenvalues of~$H$ are generically simple,
degeneracies may appear. However, the only possibility are the eigenvalues
of algebraic multiplicity two and geometric multiplicity one.
In this case, operator $H$ cannot be similar to a normal one,
nevertheless, the eigenvectors together with generalized eigenvectors
still form a Riesz basis.

Now we turn to symmetry properties of~$H$.
\begin{define}[$\PT$-symmetry]\label{Def.PT}
We say that~$H$ is $\PT$-symmetric if
\begin{equation}\label{def.PT}
[\PT, H]=0,
\end{equation}
where
\begin{equation}
(\P\psi)(x):=\psi(-x), \ \ \ (\T\psi)(x):=\overline{\psi(x)}.
\end{equation}
\end{define}

It should be stressed that $\PT$ is an antilinear operator.
The commutator relation~\eqref{def.PT} means precisely
that $(\PT)H \subset H (\PT)$,
as usual for the commutativity of an unbounded operator
with a bounded one (\cf~\cite[Sec.~III.5.6]{Kato-1966}).
In the quantum-mechanical context,
$\P$~corresponds to the parity inversion (space reflection),
while~$\T$ is the time reversal operator.

\begin{define}[$S$-self-adjointness]\label{Def.Jsa}
We say that~$H$ is $S$-self-adjoint
if the relation $H=S^{-1} H^* S$ holds
with a boundedly invertible operator~$S$.
\end{define}
We will use this concept in a wide sense,
with~$S$ being either linear or antilinear operator.
If~$S$ is a conjugation operator (\ie~antilinear involution),
then our definition coincides with the concept of $J$-self-adjointness
\cite[Sec.~III.5]{EE}.

While Definition~\ref{Def.Jsa} is quite general,
Definition~\ref{Def.PT} makes sense for operators
in a complex functional Hilbert space only.
In our case, we have:
\begin{proposition}[Symmetry properties]\label{Prop.symmetry}
\noindent
\begin{enumerate}
\item[\emph{(i)}]
$H$ is $\mathcal{T}$-self-adjoint.
\item[\emph{(ii)}]
$H$ is $\mathcal{P}$-self-adjoint if, and only if,
$c_{-}=-\overline{c_+}$.
\item[\emph{(iii)}]
$H$ is $\PT$-symmetric if, and only if,
$c_{-}=-\overline{c_+}$.
\end{enumerate}
\end{proposition}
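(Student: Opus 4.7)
The plan is to verify each of the three symmetry properties by a direct computation of the action of $\mathcal{T}$, $\mathcal{P}$, and $\mathcal{PT}$ on the domain and action of $H$, using the description of $H^*$ recalled in Proposition~\ref{H.spectral}(i), namely that $H^*$ differs from $H$ only by complex conjugation of the boundary parameters $c_{\pm}$.

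For part (i), I would first show that $\mathcal{T}$ maps $\Dom(H)$ onto $\Dom(H^*)$: if $\psi$ satisfies $\psi'(\pm a)+c_{\pm}\psi(\pm a)=0$, then taking complex conjugates gives $(\mathcal{T}\psi)'(\pm a)+\overline{c_{\pm}}(\mathcal{T}\psi)(\pm a)=0$, which is precisely the boundary condition defining $\Dom(H^*)$. Since $\mathcal{T}$ commutes with $-d^2/dx^2$ pointwise (the second derivative of $\overline{\psi}$ equals the conjugate of $\psi''$), we obtain $\mathcal{T} H\psi = H^*\mathcal{T}\psi$, i.e.\ $\mathcal{T} H = H^*\mathcal{T}$ on $\Dom(H)$. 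Because $\mathcal{T}^2=I$, this rearranges to $H=\mathcal{T}^{-1}H^*\mathcal{T}$.

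For part (ii), I would perform the analogous bookkeeping for $\mathcal{P}$. If $\phi:=\mathcal{P}\psi$ then $\phi(\pm a)=\psi(\mp a)$ and $\phi'(\pm a)=-\psi'(\mp a)$, so requiring $\phi'(\pm a)+d_{\pm}\phi(\pm a)=0$ is equivalent to $\psi'(\mp a)-d_{\pm}\psi(\mp a)=0$. Combining this with the explicit form of $H^*$, one finds
\begin{equation*}
  \mathcal{P}H^*\mathcal{P} \;=\; H(-\overline{c_+},-\overline{c_-}),
\end{equation*}
where $H(\alpha,\beta)$ denotes the realization with boundary parameters $c_-=\alpha$, $c_+=\beta$. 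The equality $H=\mathcal{P}H^*\mathcal{P}$ characterising $\mathcal{P}$-self-adjointness therefore holds if and only if $c_-=-\overline{c_+}$ (the companion equation $c_+=-\overline{c_-}$ being the complex conjugate of the first).

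Part (iii) then follows with no additional computation by combining (i) and (ii). Indeed, using $\mathcal{T}^2=\mathcal{P}^2=I$ and the fact that $\mathcal{P}$ and $\mathcal{T}$ commute, one checks $(\mathcal{PT})H(\mathcal{PT})=\mathcal{P}(\mathcal{T}H\mathcal{T})\mathcal{P}=\mathcal{P}H^*\mathcal{P}$ on $\Dom(H)$, so $\mathcal{PT}$-symmetry is equivalent to $\mathcal{P}$-self-adjointness and therefore to the same boundary-parameter condition. I do not foresee any serious obstacle; the only point demanding care is to keep the antilinearity of $\mathcal{T}$ straight (so that ``$[\mathcal{PT},H]=0$'' is interpreted in the inclusion sense recalled after Definition~\ref{Def.PT}) and to verify that the domain manipulations are genuine operator identities rather than merely formal.
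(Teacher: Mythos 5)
Your argument is correct. The paper actually states Proposition~\ref{Prop.symmetry} without any proof (it is treated as an elementary consequence of the explicit description of $H^*$ in Proposition~\ref{H.spectral}(i)), and your direct bookkeeping of how $\mathcal{T}$ and $\mathcal{P}$ transform the boundary conditions, followed by the reduction of (iii) to (ii) via $\mathcal{T}H\mathcal{T}=H^*$, is precisely the standard verification being implicitly invoked; the only point worth making explicit is that the operator inclusions obtained from the domain computations upgrade to equalities because both sides are realizations of the same differential expression with separated boundary conditions (equivalently, because both are m-sectorial, an inclusion forces equality).
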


Property (ii) coincides with the notion of self-adjointness
in the Krein space equipped with
the indefinite inner product $\langle \cdot, \P \cdot \rangle$.
It is also referred to as $\P$-pseudo-Hermiticity in physical literature
(see, \eg, \cite{Mostafazadeh-2010-7}).

It follows from Proposition~\ref{Prop.symmetry}.(i) that the residual
spectrum of~$H$ is empty (\cf~\cite[Corol.~2.1]{Borisov-2008-62}).
Alternatively, it is a consequence of Proposition~\ref{H.spectral}.(iii),
which in addition implies that the spectrum of~$H$ is purely discrete.

We denote the (countable) set of eigenvalues of~$H$
by $\{\lambda_n\}_{n=0}^\infty$ and the corresponding
set of eigenfunctions by $\{\psi_n\}_{n=0}^\infty$.
Similarly, let $\{\overline{\lambda_n}\}_{n=0}^\infty$
and $\{\phi_n\}_{n=0}^\infty$ be the set of eigenvalues
and eigenfunctions of the adjoint operator~$H^*$.
That is
\begin{equation}\label{H.psi.phi}
H \psi_n = \lambda_n \psi_n, \qquad
H^* \phi_n = \overline{\lambda_n} \phi_n.
\end{equation}
Eigenfunctions $\psi_n$ and $\phi_m$
corresponding to different eigenvalues,
\ie\ $\lambda_n\not=\lambda_m$,
are clearly orthogonal.
Solving the eigenvalue equation for~$H$
in terms of sine and cosine functions,
it is straightforward to reduce the boundary value problem
to an algebraic one.
\begin{proposition}[Spectrum]\label{EV.EF}
\noindent
The eigenvalues $\lambda_n=l_n^2$ of $H$ are solutions
of the implicit equation
\begin{equation}\label{EV.eq}
\sin(2 a l) (c_- c_+ + l^2) + (c_- - c_+)l \cos(2 a l)=0
\,.
\end{equation}
The corresponding eigenfunctions of $H$ and $H^*$ respectively read
\begin{equation}\label{H.EF}
\begin{aligned}
\psi_n(x) &
= A_n \frac{1}{\sqrt{a}} \left( \cos (l_n(x+a)) - \frac{c_-} {l_n}
\sin(l_n(x+a)) \right), & \\
\phi_n(x) & = \frac{1}{\sqrt{a}}
\left( \cos (\overline{l_n}(x+a)) - \frac{\overline{c_-}}
{\overline{l_n}} \sin(\overline{l_n}(x+a)) \right).
\end{aligned}
\end{equation}
If all eigenvalues are simple, $\psi_n $ can be normalized
through the coefficients $A_n$ in such a way that
$\langle \psi_n, \phi_m \rangle = \delta_{nm}$.
\end{proposition}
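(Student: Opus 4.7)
The plan is to reduce the eigenvalue equation $-\psi'' = l^2 \psi$ with~\eqref{bc-intro} to an algebraic condition on $l$. Writing the general solution in the basis adapted to the left endpoint,
\begin{equation*}
\psi(x) = \alpha \cos\bigl(l(x+a)\bigr) + \beta \sin\bigl(l(x+a)\bigr),
\end{equation*}
one has $\psi(-a) = \alpha$ and $\psi'(-a) = l\beta$, so the boundary condition at $-a$ forces $l\beta = -c_- \alpha$. For $l \neq 0$ this gives $\beta = -c_- \alpha / l$ and yields the eigenfunction displayed in~\eqref{H.EF} with $A_n := \alpha$; the borderline $l = 0$ is handled by observing that $\sin\bigl(l(x+a)\bigr)/l \to x+a$ as $l \to 0$, so the formula extends continuously and remains a valid solution of $-\psi'' = 0$ that obeys the left Robin condition.

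Next I would impose the remaining condition $\psi'(a) + c_+ \psi(a) = 0$. Using
\begin{equation*}
\psi(a) = \alpha\Bigl[\cos(2al) - \tfrac{c_-}{l}\sin(2al)\Bigr], \qquad \psi'(a) = -\alpha\bigl[l\sin(2al) + c_-\cos(2al)\bigr],
\end{equation*}
and multiplying through by $-l/\alpha$, the condition collapses to $(l^2 + c_+ c_-)\sin(2al) + (c_- - c_+)l\cos(2al) = 0$, which is exactly~\eqref{EV.eq}. For the adjoint, Proposition~\ref{H.spectral}.(i) identifies $H^*$ as the same operator with $c_\pm$ replaced by $\overline{c_\pm}$; repeating the procedure (and using that the eigenvalues of $H^*$ are $\overline{\lambda_n} = (\overline{l_n})^2$) produces the displayed formula for $\phi_n$, with the prefactor set to unity because the biorthogonality will be implemented solely through the freedom in $A_n$.

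For the last claim, I would invoke the standard identity
\begin{equation*}
(\lambda_n - \lambda_m)\,\langle \phi_m, \psi_n\rangle = \langle H^*\phi_m, \psi_n\rangle - \langle \phi_m, H\psi_n\rangle = 0,
\end{equation*}
which forces $\langle \phi_m, \psi_n\rangle = 0$ whenever $\lambda_n \neq \lambda_m$. When every eigenvalue is simple, the discrete spectral operator property from Proposition~\ref{H.spectral}.(iv) makes each Riesz projection rank one, which in turn forces $\langle \phi_n, \psi_n\rangle \neq 0$ (otherwise a generalized eigenvector would have to appear). Choosing $A_n$ to cancel this nonzero pairing then yields $\langle \psi_n, \phi_m\rangle = \delta_{nm}$. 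The only real obstacle is bookkeeping rather than conceptual: checking that the trigonometric expression at the right endpoint simplifies to~\eqref{EV.eq} exactly, and justifying the non-vanishing of $\langle \phi_n, \psi_n\rangle$ at simple eigenvalues, a fact that can alternatively be verified by a direct computation from the explicit formulas~\eqref{H.EF}.
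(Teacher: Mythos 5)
Your proposal is correct and follows exactly the route the paper indicates (the paper only sketches it with the remark that "solving the eigenvalue equation in terms of sine and cosine functions, it is straightforward to reduce the boundary value problem to an algebraic one"): expand in $\cos(l(x+a))$, $\sin(l(x+a))$, use the left Robin condition to fix the ratio of coefficients, and let the right condition produce \eqref{EV.eq}, with the adjoint handled via Proposition~\ref{H.spectral}.(i) and biorthogonality via the standard pairing identity. The computations check out, and your justification of $\langle\phi_n,\psi_n\rangle\neq 0$ at simple eigenvalues via rank-one Riesz projections is a legitimate way to close the one step the paper leaves implicit.
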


The spectrum of $H$ has been described more explicitly
for the $\PT$-symmetric case.
First of all, as a consequence of the symmetry,
we know that the spectrum is symmetric with respect to
the real axis.
In the following proposition we summarize more precise results
obtained in \cite{Krejcirik-2006-39, Krejcirik-2010-43}.
\begin{proposition}[$\PT$-symmetric spectrum]\label{H.PT.EV}
Let $c_{\pm}=\ii \alpha \pm \beta$, with $\alpha,\beta \in \R$.
\begin{enumerate}
\item
If $\beta=0$ then all eigenvalues of $H$ are real,
\begin{equation}\label{H.beta.0.EV}
\lambda_0={\alpha^2}, \ \ \lambda_n=k_n^2, \ \ n \in \N.
\end{equation}
The corresponding eigenfunctions of $H$ and $H^*$
respectively read
\begin{equation}\label{H.beta.0.EF}
\begin{aligned}
\psi_0(x) &= A_0 e^{-\ii \alpha (x+a)}, &
\psi_n(x)& =A_n \left (\chi_n^N(x) -\ii \frac{\alpha} {k_n} \chi_n^D(x) \right), & \\
\phi_0(x) &= \frac{1}{\sqrt{2a}}\, e^{\ii \alpha (x+a)}, &
\phi_n(x)& = \chi_n^N(x) + \ii \frac{\alpha} {k_n} \chi_n^D(x). & \\
\end{aligned}
\end{equation}
If $\alpha\neq k_n$ for every $n \in \mathbb{N}$,
then all the eigenvalues are simple
and choosing
\begin{equation}\label{H.beta.0.EF2}
\begin{aligned}
A_0&:=\frac{\alpha e^{2\ii \alpha a} \sqrt{2a}}{ \sin (2 \alpha a)},
& A_n:=\frac{k_n^2}{k_n^2-\alpha^2},
\end{aligned}
\end{equation}
we have the biorthonormal relations
$\langle \psi_n, \phi_m \rangle = \delta_{nm}$.

\item
If $\beta>0$, then all the eigenvalues of $H$ are real and simple.

\item
If  $\beta<0$, then all the eigenvalues are either real
or there is one pair of complex conjugated eigenvalues
with real part located in the neighborhood of $\alpha^2+\beta^2$.
\end{enumerate}
In any case,
the eigenvalue equation \eqref{EV.eq} can be rewritten as
\begin{equation}\label{PT.EV.eq}
(l^2-\alpha^2-\beta^2) \sin (2a l) - 2 \beta l \cos(2a l )=0.
\end{equation}
\end{proposition}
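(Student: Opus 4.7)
The plan is to proceed in two stages, handling first the derivation of the simplified characteristic equation \eqref{PT.EV.eq} and the explicit $\beta=0$ branch from scratch, and then reducing the deeper spectral statements for $\beta \neq 0$ to the characteristic equation and to the previous papers \cite{Krejcirik-2006-39,Krejcirik-2010-43}.

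First I would plug $c_\pm = \ii\alpha \pm \beta$ into \eqref{EV.eq}. The algebraic simplifications $c_+ c_- = -(\alpha^2+\beta^2)$ and $c_- - c_+ = -2\beta$ turn \eqref{EV.eq} directly into \eqref{PT.EV.eq}, so this step is purely computational.

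For claim~(1), I would specialize \eqref{PT.EV.eq} to $\beta=0$, where it factorizes as $(l^2-\alpha^2)\sin(2al)=0$. The roots in $l^2$ are precisely $\alpha^2$ and $k_n^2$ for $n\geq 1$, which yields \eqref{H.beta.0.EV}. Substituting $c_-=\ii\alpha$ into \eqref{H.EF} at $l=\alpha$ collapses the cosine and sine into $e^{-\ii\alpha(x+a)}$, while at $l=k_n$ it produces the linear combination of $\chi_n^N$ and $\chi_n^D$ given in \eqref{H.beta.0.EF}; the formulas for $\phi_n$ follow identically because $H^*$ corresponds to replacing $\alpha$ by $-\alpha$. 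For the simplicity under $\alpha \neq k_n$, I would note that the eigenvalues $\{\alpha^2\}\cup\{k_n^2\}_{n\geq 1}$ are automatically mutually distinct under this hypothesis, that the geometric multiplicity is always one (the Cauchy problem for the ODE with one boundary condition has a one-dimensional solution space), and that the derivative of the left-hand side of \eqref{PT.EV.eq} (viewed as a function of $l$) does not vanish at these roots under the same hypothesis, which gives algebraic simplicity via the standard identification of algebraic multiplicity with the order of the zero of the characteristic function. The biorthonormality reduces, by the usual identity $(\overline{\lambda_n}-\lambda_m)\langle\psi_n,\phi_m\rangle=0$ and reality of the spectrum, to the diagonal case. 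For $n\geq 1$ I would compute $\langle\psi_n,\phi_n\rangle$ using the orthonormality of $\{\chi_n^N\}$ and $\{\chi_n^D\}$ separately and the elementary identity $\langle\chi_n^D,\chi_n^N\rangle=0$ (a direct consequence of $4k_n a=2n\pi$), which yields $\langle\psi_n,\phi_n\rangle = \overline{A_n}(k_n^2-\alpha^2)/k_n^2$ and forces $A_n$ to be as in \eqref{H.beta.0.EF2}; the $n=0$ case is a one-line exponential integral giving the claimed $A_0$.

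For claims~(2) and~(3), I would invoke the analysis already carried out in \cite{Krejcirik-2006-39,Krejcirik-2010-43}: viewing \eqref{PT.EV.eq} as a transcendental equation in the complex $l$-plane and splitting it into real and imaginary parts after writing $l=s+\ii t$, one tracks how the zero set moves as $\beta$ varies from $0$. For $\beta>0$ the eigenvalues stay on the real axis and remain simple (no collisions of roots occur), while for $\beta<0$ a single pair of roots can leave the real axis through a double root; the local behaviour pins the real part of the complex conjugate pair near $\alpha^2+\beta^2$. The hard part of the whole proposition is precisely this last dichotomy, because it requires a genuinely global argument on \eqref{PT.EV.eq}, whereas all of Step~2 above is local and essentially algebraic; consequently my plan is to lean on the existing treatment in \cite{Krejcirik-2010-43} rather than repeat it.
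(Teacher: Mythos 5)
Your proposal is correct and follows essentially the same route as the paper, which in fact offers no proof at all: it explicitly presents this proposition as a summary of results from \cite{Krejcirik-2006-39,Krejcirik-2010-43}, so deferring claims~(2) and~(3) to those references is exactly what the authors do, and your explicit verification of \eqref{PT.EV.eq} and of part~(1) (factorization $(l^2-\alpha^2)\sin(2al)=0$, collapse of \eqref{H.EF} into the exponential and the $\chi_n^N,\chi_n^D$ combinations, and the normalization computations yielding \eqref{H.beta.0.EF2}) correctly fills in the routine algebra the paper leaves implicit. The only cosmetic caveat is that the spurious root $l=0$ of $\sin(2al)=0$ should be noted as not corresponding to an eigenvalue when $\alpha\neq 0$, consistent with the indexing $\lambda_n=k_n^2$ for $n\geq 1$ only.
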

%

\subsection{Concept of the metric operator}\label{metric.c}
%
We recall the concept of metric operator
(or quasi-Hermitian operators introduced in \cite{Dieudonne-1961}),
widely used in $\PT$-symmetric literature.
\begin{define}[Metric operator and quasi-Hermiticity]\label{Theta.definition}
Bounded positive
\footnote{$A$ is positive if $\langle f, A f \rangle > 0$
for all $f \in \H,$ $f\neq 0$.}
operator $\Theta$ with bounded inverse
is called a metric operator for $H$,
if $H$ is $\Theta$-self-adjoint.
$H$ is then called quasi-Hermitian.
\end{define}

It is obvious that the quasi-Hermitian operator~$H$
is self-adjoint with respect to the modified inner product
$\langle \cdot , \cdot \rangle_{\Theta}:=\langle \cdot, \Theta \cdot \rangle$.
It is also not difficult to show that the metric operator
exists if, and only if, $H$~is similar to a self-adjoint operator.
Moreover, since~$H$ has purely discrete spectrum,
the metric operator can be obtained as
\begin{equation}\label{Theta.def}
\Theta=\sum_{n=0}^{\infty} C_n^2 \, \phi_n \langle \phi_n, \cdot \rangle,
\end{equation}
where $\phi_n$ are eigenfunctions of $H^*$
and $C_n$ are real constants satisfying \eqref{Cn.def}.

The expression \eqref{Theta.def} illustrates a non-uniqueness
of the metric operator caused by the arbitrariness of~$C_n$.
The latter can be actually viewed as a modification of
the normalization of functions $\phi_n$.
Choosing different sequences $\{C_n\}_{n=0}^{\infty}$,
we obtain all metric operators for $H$, \cf~\cite{Siegl-MT, Siegl-2011-50}.

It is important to stress that if we define an operator $\Theta$
by \eqref{Theta.def}, we find that such $\Theta$ is bounded, positive,
and with bounded inverse whenever $\{\phi_n\}_{n=0}^{\infty}$
is a Riesz basis. Thus, by virtue of Proposition~\ref{H.spectral}.(v),
such a $\Theta$ exists if, and only if, all eigenvalues of $H$ are simple.
However, the $\Theta$-self-adjointness of $H$ is satisfied
if, and only if, the spectrum of $H$ is real.
Otherwise, only $\Theta H \Theta^{-1}H^* = H^* \Theta H \Theta^{-1}$ holds,
\cf~\cite{Siegl-2011-50}, which is equivalent to the fact
that $H$ is similar to a normal operator.

In the following, the operator $\Theta$ is always defined by~\eqref{Theta.def}
regardless if it is a metric operator for $H$
in view of Definition \ref{Theta.definition}.

It should be also noted that $\Theta$, as a positive operator,
can be always decomposed to
\begin{equation}\label{Theta.dec}
\Theta = \Omega^* \Omega.
\end{equation}
One example of such $\Omega$ is obviously $\sqrt{\Theta}$.
We shall take the advantage of some different decompositions
of the type~\eqref{Theta.dec} later.
It follows easily from Definition~\ref{Theta.definition}
that the similar operator~$h$ defined by~\eqref{sim-intro}
with~$\Omega$ given by~\eqref{Theta.dec}
is self-adjoint if~$\Theta$ is a metric operator for~$H$.
If all eigenvalues of $H$ are simple but no longer entirely real,
$h$~is (only) a normal operator.

\subsection{Concept of the \texorpdfstring{$\mathcal{C}$}{C} operator}
%
For $\PT$-symmetric operators, the notion of $\mC$ operator
was introduced in \cite{Bender-2002-89}
and formalized in \cite{Albeverio-2005-38}.
It was observed in \cite{Langer-2004-54}
and in many works after that paper
that Krein spaces provide suitable
framework for studying $\PT$-symmetric operators.
Indeed, $\PT$-symmetric operators
which are at the same time $\mathcal{P}$-self-adjoint
are in fact self-adjoint in the Krein space equipped
with the indefinite inner product
$\langle \cdot,\P\cdot \rangle$.
Recall that our operator $H$ is $\P$-self-adjoint
if, and only if, it is $\PT$-symmetric
(\cf~Proposition~\ref{Prop.symmetry}).
\begin{define}[$\mathcal{C}$ operator]\label{c.def}
Assume that $H$ is $\P$-self-adjoint (\cf~Proposition~\ref{Prop.symmetry}).
We say that $H$
possesses the property of $\mC$-symmetry,
if there exists a bounded linear operator $\mC$
such that $[H,\mC] = 0,$ $\mC^2=I,$ and $\P \mC $ is a metric operator for~$H$.
\end{define}
Thus, from the point of view of metric operators,
we can find the $\mC$~operator
as $\mC:=\P \Theta$ for $\Theta$ satisfying $(\P \Theta)^2 = I$.
Hence $\mC$-symmetry allows us to naturally choose a metric operator.
Besides a possible physical interpretation of $\mC$ discussed
in \cite{Bender-1999-40, Bender-2007-70},
it appears naturally in the Krein spaces framework as pointed out
in \cite{Kuzhel-2010-16,Kuzhel-2011-379} as a fundamental symmetry
of the Krein space $(\H, \langle \cdot, \P\cdot \rangle)$
with an underlying Hilbert space $(\H, \langle \cdot,\P \mC \cdot \rangle )$.

\section{General results}\label{sec.metric}
\label{sec:gen.res}
%
In this section we provide general properties of
the metric operator~$\Theta$ defined in \eqref{Theta.def}
and its decompositions~$\Omega$ from~\eqref{Theta.dec}.

Let $\{\psi_n\}_{n=0}^{\infty}$ and $\{\phi_n\}_{n=0}^{\infty}$
denote the set of eigenvectors of~$H$ and~$H^*$, respectively.
We assume that~$\psi_n$ and~$\phi_n$ form Riesz bases
and that they are normalized in such a way
that $\langle \psi_n, \phi_m \rangle =\delta_{mn}$.
In view of Propositions~\ref{H.spectral}, \ref{EV.EF},
we know that this is satisfied if all the eigenvalues of~$H$ are simple,
which is a generic situation.

Let $\{e_n\}_{n=0}^{\infty}$ be any orthonormal basis of~$\H$.
If all eigenvalues of $H$ are simple, we introduce an operator $\Omega$ by
\begin{equation}\label{Omega.def}
\Omega := \sum_{n=0}^{\infty} e_n \langle \phi_n, \cdot \rangle.
\end{equation}
Clearly, $\Omega:\psi_n \mapsto e_n$.

$\Omega$ is defined only if all eigenvalues are simple, however, sometimes it is possible to
extend it by continuity, see examples in Section \ref{sec:examples}. Nonetheless, such $\Omega$ is typically not invertible and the dimension of the kernel corresponds to the size of Jordan blocks appearing in the spectrum of $H$.

Basic properties of~$\Omega$ are summarized in the following.
\begin{lemma}
Let all eigenvalues of $H$ be simple.
Then $\Omega$ is a bounded operator with bounded inverse
given by
\begin{equation}\label{Omega.inv.def}
\Omega^{-1}= \sum_{n=0}^{\infty} \psi_n \langle e_n, \cdot \rangle,
\end{equation}
\ie~$\Omega^{-1}: e_n \mapsto \psi_n$.
The adjoint of $\Omega$ reads
\begin{equation}\label{Omega.adj.def}
\Omega^*=\sum_{n=0}^{\infty} \phi_n \langle e_n, \cdot \rangle.
\end{equation}
\ie~$\Omega^*: e_n \mapsto \phi_n$
and $\Omega^*\Omega=\Theta$, where $\Theta$ is defined in \eqref{Theta.def} with $C_n=1$.
\end{lemma}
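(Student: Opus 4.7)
The plan is to exploit the fact that Riesz bases are precisely the images of orthonormal bases under bounded operators with bounded inverse. Since $\{\psi_n\}$ is assumed to be a Riesz basis, there exists $T\in\BH$ with bounded inverse such that $Te_n=\psi_n$ for every $n\geq 0$. The biorthonormal relations $\langle\psi_n,\phi_m\rangle=\delta_{mn}$ then force $T^*\phi_n=e_n$, so $\{\phi_n\}$ is automatically a Riesz basis as well, and in particular the two-sided bounds
\begin{equation*}
A\|f\|^2 \leq \sum_{n=0}^{\infty} |\langle \phi_n, f\rangle|^2 \leq B\|f\|^2
\end{equation*}
hold for some $0<A\leq B$ and every $f\in\H$. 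This is the only quantitative input the argument requires.

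From here, the first step is to verify that the series defining $\Omega$ in~\eqref{Omega.def} converges strongly on $\H$, with $\|\Omega f\|^2 = \sum_n |\langle \phi_n,f\rangle|^2 \leq B\|f\|^2$ by orthonormality of $\{e_n\}$. The same estimate, applied with the roles of $\phi_n$ and $\psi_n$ exchanged (now using that $\{\psi_n\}$ is itself a Riesz basis), shows that the candidate expression in~\eqref{Omega.inv.def} defines a bounded operator $\tilde\Omega$ on $\H$.

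Next I identify $\tilde\Omega$ with $\Omega^{-1}$ by testing on the bases: $\Omega\psi_n = \sum_m e_m \langle \phi_m,\psi_n\rangle = e_n$ by biorthonormality, and $\tilde\Omega e_n = \psi_n$ by orthonormality of $\{e_n\}$. Hence $\Omega\tilde\Omega = \tilde\Omega\Omega = I$ on the dense linear span of $\{e_n\}$ (equivalently of $\{\psi_n\}$), and these identities extend to all of $\H$ by boundedness. The adjoint formula~\eqref{Omega.adj.def} then comes from a direct computation,
\begin{equation*}
\langle g,\Omega f\rangle = \sum_n \langle g,e_n\rangle\langle \phi_n,f\rangle = \Big\langle \sum_n \phi_n\langle e_n, g\rangle,\, f\Big\rangle,
\end{equation*}
which is legitimate by continuity of the inner product together with the strong convergence established above. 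Finally, substituting gives $\Omega^*\Omega f = \sum_m \phi_m\langle e_m,\Omega f\rangle = \sum_m \phi_m\langle \phi_m,f\rangle$, which is precisely \eqref{Theta.def} with $C_n=1$.

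There is no genuine obstacle here: once the Riesz-basis bounds are in hand, the entire statement reduces to bookkeeping with unconditionally convergent series. The only point that deserves a word of justification is the interchange of infinite summation and the inner product in the adjoint computation, which is immediate from continuity once strong convergence of the relevant series has been secured.
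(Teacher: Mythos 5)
Your argument is correct and is exactly the standard Riesz-basis bookkeeping the paper has in mind: the paper in fact states this lemma without any proof, relying on the standing assumption (made just above \eqref{Omega.def}) that $\{\psi_n\}$ and $\{\phi_n\}$ are biorthonormal Riesz bases. Your write-up simply supplies the omitted details — the frame bounds, the identification of the inverse on the dense spans, and the adjoint computation — and all of these steps are sound.
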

Furthermore, we show how the operator $\Omega$ can be realized.
\begin{theorem}\label{Omega.real}
Let all eigenvalues of $H$ be simple. $\Omega$ can be expressed as
\begin{equation}\label{Omega.real.1}
\Omega = U + L,
\end{equation}
where $U := \sum_{n=0}^{\infty} e_n \langle \chi_n^N, \cdot \rangle$, \ie~$U:\chi_n^N \mapsto e_n$, is a unitary operator, and $L$ is a Hilbert-Schmidt operator.
\end{theorem}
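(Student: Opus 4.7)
The plan is to write $L := \Omega - U = \sum_{n=0}^\infty e_n \langle \phi_n - \chi_n^N, \cdot \rangle$ and reduce the Hilbert--Schmidt property to an asymptotic estimate of the eigenfunctions. Unitarity of $U$ is immediate: both $\{\chi_n^N\}_{n \geq 0}$ and $\{e_n\}_{n \geq 0}$ are orthonormal bases of $\H$, so $U$ is simply a unitary change of basis. Since $\{e_n\}$ is orthonormal, a direct computation of $\mathrm{tr}(L^*L)$ gives
\begin{equation*}
\|L\|_{\mathrm{HS}}^2 = \sum_{n=0}^\infty \|\phi_n - \chi_n^N\|^2,
\end{equation*}
so it suffices to prove that this series converges.

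The heart of the matter is therefore an asymptotic analysis of the square roots $l_n$ of the eigenvalues and of the eigenfunctions $\phi_n$ given in Proposition~\ref{EV.EF}. Dividing the eigenvalue equation~\eqref{EV.eq} by $l^2$ yields
\begin{equation*}
\sin(2al) + \tfrac{c_- - c_+}{l}\cos(2al) + \tfrac{c_-c_+}{l^2}\sin(2al) = 0,
\end{equation*}
from which one reads off that, for large $n$, the eigenvalues cluster near the zeros $k_n = n\pi/(2a)$ of $\sin(2a\cdot)$, with $l_n = k_n + O(1/n)$; in particular $\mathrm{Im}\, l_n \to 0$, so $l_n$ is eventually real and close to $k_n$. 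Making this rigorous calls for a standard Rouch\'e-type localisation (or an implicit function argument) on small discs around each $k_n$, together with a counting argument ensuring that exactly one $l_n$ is captured in each disc.

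Once this is established, I compare $\phi_n$ to $\chi_n^N$ for $n \geq 1$ by writing, from~\eqref{H.EF} and~\eqref{not.1},
\begin{equation*}
\phi_n - \chi_n^N = \tfrac{1}{\sqrt{a}}\bigl[\cos(\overline{l_n}(x+a)) - \cos(k_n(x+a))\bigr] - \tfrac{\overline{c_-}}{\overline{l_n}\sqrt{a}}\sin(\overline{l_n}(x+a)),
\end{equation*}
applying the identity $\cos A - \cos B = -2\sin\tfrac{A+B}{2}\sin\tfrac{A-B}{2}$ to the first bracket, and using $|\sin z| \leq C|z|$ uniformly in a bounded neighbourhood of the real axis (which is permissible since $l_n$ is eventually real to leading order). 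Together with the eigenvalue asymptotics, this yields
\begin{equation*}
\|\phi_n - \chi_n^N\|^2 = O(|l_n - k_n|^2) + O(|l_n|^{-2}) = O(1/n^2),
\end{equation*}
so the series is summable; the finitely many low indices (in particular $n=0$, where $\chi_0^N$ has a different normalisation) contribute only a finite constant and do not affect convergence. The main obstacle I anticipate is the eigenvalue asymptotics step: not the leading-order expansion itself, which is formal, but the verification that $\{l_n\}$ is correctly enumerated so that a single $l_n$ accompanies each large $k_n$, with no spurious or missing indices; after that, everything reduces to elementary trigonometric bounds and summability of $1/n^2$.
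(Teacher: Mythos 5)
Your proposal is correct and follows essentially the same route as the paper: both split off the unitary $U$ and reduce the Hilbert--Schmidt property of the remainder to the eigenvalue asymptotics $l_n = k_n + O(1/n)$ with $|\Im l_n|$ bounded (which the paper simply quotes from \cite[proof of Lem.~XIX.3.10]{DS3} rather than rederiving via a Rouch\'e argument), yielding $\|\phi_n-\chi_n^N\|=O(1/n)$. Your direct trace identity $\|L\|_{\rm HS}^2=\sum_{n}\|\phi_n-\chi_n^N\|^2$ is a slightly cleaner packaging of the paper's term-by-term Bessel-inequality estimate of the same quantity.
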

\begin{proof}
At first we remark that it suffices to prove that $\Omega=I+\tilde{L}$
for $e_n:=\chi_n^N$, where $\tilde{L}$ is Hilbert-Schmidt.
More precisely, if we compose $U$ from the claim and $I+\tilde{L}$,
we obtain $\Omega$ in \eqref{Omega.real.1}
since $L=U\tilde{L}$ is Hilbert-Schmidt too.
Thus, we consider this choice of~$e_n$ in the following.
Furthermore, we put $a:=\pi/2$ for simplification of the formulae.
This specific choice is in fact harmless, since we
can easily transfer the results for different $a$ using the isometry
$V:L^2(-\pi/2,\pi/2) \rightarrow L^2(-a,a)$
defined by
$\psi(x) \mapsto \sqrt{\frac{\pi}{2a}} \psi(\frac{\pi x}{2a})$.

The asymptotic analysis of eigenvalues of $H$ in
\cite[proof of Lem.~XIX.3.10]{DS3} shows that
\begin{equation}\label{ln.as}
\begin{aligned}
l_n &= n+\frac{c_+ - c_-}{\pi n} + \mO(n^{-2}),\\
\lambda_n  \equiv l_n^2 & = k_n^2 + \frac{2 (c_+ - c_-)}{\pi} + \mO(n^{-1}),
\end{aligned}
\end{equation}
and $|\Im(l_n)|$ is uniformly bounded in $n$.
These formulae are valid except for a finite number~$N_0$
of eigenvalues.

We set $\varepsilon_n:= l_n-k_n=l_n-n$.
Using elementary trigonometric identities,
we rewrite the eigenfunctions~$\phi_n$ as follows
\begin{equation}\label{phi.rewritten}
\begin{aligned}
\phi_n(x) &= \chi_n^N(x)
\cos \left( \overline{\varepsilon_n} (x+a) \right) -
\chi_n^D(x) \sin \left(\overline{\varepsilon_n} (x+a)\right)  \\
& \quad - \frac{\overline{c_-}}{\overline{l_n}}
\left[
\chi_n^D(x) \cos (\overline{\varepsilon_n} (x+a) ) +
\chi_n^N(x) \sin \left( \overline{\varepsilon_n} (x+a) \right)
\right].
\end{aligned}
\end{equation}
We further rewrite the cosine and sine functions in this expression as
\begin{equation}\label{cn.sn.def}
\begin{aligned}
\cos \left(\overline{\varepsilon_n} (x+a)\right)
&= 1 + \overline{\varepsilon_n}^2 \,
\frac{\cos \left(\overline{\varepsilon_n} (x+a)\right) -1}
{\overline{\varepsilon_n}^2}
=:1 + \overline{\varepsilon_n}^2 \, c_n(x), \\
\sin \left(\overline{\varepsilon_n} (x+a)\right)
&= \overline{\varepsilon_n} \,
\frac{\sin \left(\overline{\varepsilon_n} (x+a)\right)}{\overline{\varepsilon_n}}
=:\overline{\varepsilon_n} \, s_n(x). \\
\end{aligned}
\end{equation}
Note that $\|c_n\|$ and $\|s_n\|$
are uniformly bounded in $n$
because of the properties of $\varepsilon_n$.
The building block $\chi_n^N \langle \phi_n, \cdot \rangle $ of $\Omega$
then becomes
\begin{equation}
\begin{aligned}
\chi_n^N \langle \phi_n, \cdot \rangle & =  \chi_n^N \langle \chi_n^N, \cdot \rangle +
\varepsilon_n^2 \chi_n^N \langle \chi_n^N c_n, \cdot \rangle -
\varepsilon_n \chi_n^N \langle \chi_n^D s_n, \cdot \rangle \\
& \quad - \frac{c_-}{l_n}
\left( \chi_n^N \langle \chi_n^D, \cdot \rangle + \varepsilon_n^2 \chi_n^N \langle \chi_n^D c_n, \cdot \rangle
+ \varepsilon_n \chi_n^N \langle \chi_n^N s_n, \cdot \rangle
\right).
\end{aligned}
\end{equation}
Taking the sum of $\chi_n^N \langle \phi_n, \cdot \rangle $
as in \eqref{Omega.def},
we obviously get $\Omega = I+\tilde{L}$.

It remains to show that the Hilbert-Schmidt norm $\|\tilde{L}\|_{\rm HS}$
of $\tilde{L}$ is finite. We will understand $\tilde{L}$
as a sum $\tilde{L}=\tilde{L}_{N_0} +  \tilde{L}_{\infty}$, where
\begin{equation}
\tilde{L}_{N_0}:= \sum_{n=0}^{N_0-1} \chi_n^N \langle \tilde{\phi}_n, \cdot \rangle, \ \
\tilde{L}_{\infty}:= \sum_{n=N_0}^{\infty} \chi_n^N \langle \tilde{\phi}_n, \cdot \rangle,
\end{equation}
and $\tilde{\phi}_n:=\phi_n-\chi_n^N$.
$\tilde{L}_{N_0}$ is a finite rank operator,
hence it is automatically Hilbert-Schmidt
and it suffices to consider $\tilde{L}_{\infty}$
in the rest of the proof.
We estimate explicitly
only one term in the expression for $\|\tilde{L}_{\infty}\|^2_{\rm HS}$, the rest follows in a similar way:
\begin{equation}
\begin{split}
&\sum_{p=0}^{\infty}
 \left\langle
\sum_{n=N_0}^{\infty} \varepsilon_n \chi_n^N \left\langle \chi_n^D s_n, \chi_p^N   \right \rangle,
\sum_{m=N_0}^{\infty} \varepsilon_m \chi_m^N \left\langle \chi_m^D s_m, \chi_p^N   \right \rangle
\right  \rangle    \\
&= \sum_{p=0}^{\infty} \sum_{n=N_0}^{\infty} |\varepsilon_n|^2 \left| \left\langle \chi_n^D s_n, \chi_p^N \right  \rangle \right|^2 \leq
\frac{1}{a} \sum_{n=N_0}^{\infty} |\varepsilon_n|^2  \| s_n\|^2 < \infty.
\end{split}
\end{equation}
Here the first inequality follows by the Bessel inequality
(after interchanging the order of summation, which is justified)
and by estimating~$\chi_n^D$ by its supremum norm.
The asymptotic behavior of $\varepsilon_n$
and the uniform boundedness of $\|s_n\|$ are used in the last step.
\end{proof}
\begin{corollary}
Let all eigenvalues of $H$ be simple. Then
\begin{equation}\label{Theta.dec.real}
\Theta:= \Omega^* \Omega = I + K
\end{equation}
coincides with $\Theta$ defined in \eqref{Theta.def} with $C_n=1$.
Here~$K$ is a Hilbert-Schmidt operator
that can be realized as an integral operator
with a kernel belonging to $L^2((-a,a)\times(-a,a))$.
\end{corollary}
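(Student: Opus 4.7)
The approach is to apply the previous Theorem with the specific orthonormal basis $e_n := \chi_n^N$. As already exploited inside the proof of that Theorem, this choice makes the unitary factor $U$ equal to the identity, and so $\Omega = I + \tilde{L}$ with $\tilde{L}$ Hilbert-Schmidt. From here the corollary should be essentially a direct computation combined with standard facts about the Hilbert-Schmidt ideal.

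Concretely, I would expand
\[
\Theta = \Omega^*\Omega = (I+\tilde{L})^*(I+\tilde{L}) = I + \tilde{L} + \tilde{L}^* + \tilde{L}^*\tilde{L},
\]
and set $K := \tilde{L} + \tilde{L}^* + \tilde{L}^*\tilde{L}$. That $K$ is Hilbert-Schmidt is immediate from the ideal structure of $\mathscr{B}(\mathcal{H})$: the previous Theorem supplies that $\tilde{L}$ is Hilbert-Schmidt, the adjoint $\tilde{L}^*$ has the same Hilbert-Schmidt norm as $\tilde{L}$, and the product $\tilde{L}^*\tilde{L}$ is even trace class (hence Hilbert-Schmidt). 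Linearity of the Hilbert-Schmidt class then gives $K \in \mathscr{B}(\mathcal{H})$ Hilbert-Schmidt. The identification of this $\Omega^*\Omega$ with the operator $\Theta$ from \eqref{Theta.def} for the choice $C_n = 1$ is already recorded in the Lemma preceding the Theorem, so no additional argument is needed there.

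It remains to upgrade ``Hilbert-Schmidt'' to ``integral operator with $L^2$ kernel''. This is a textbook fact: on $L^2(-a,a)$ the Hilbert-Schmidt class is isometrically identified with $L^2((-a,a)\times(-a,a))$ via the map sending a kernel $k(x,y)$ to the integral operator it defines, with $\|K\|_{\rm HS} = \|k\|_{L^2}$. Applying this to $K$ yields the desired integral kernel in $L^2((-a,a)\times(-a,a))$.

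The main obstacle in this corollary is essentially negligible: all the analytic work — the asymptotics \eqref{ln.as} of $l_n$, the rewriting \eqref{phi.rewritten} of $\phi_n$, and the Bessel-inequality estimate for $\|\tilde{L}\|_{\rm HS}$ — was already carried out in the proof of the Theorem. The corollary itself amounts to a clean algebraic expansion plus two invocations of classical Hilbert-Schmidt theory.
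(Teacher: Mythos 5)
Your proposal is correct and follows essentially the same route as the paper: the corollary is deduced from Theorem \ref{Omega.real} together with the fact that the Hilbert--Schmidt operators form a $*$-both-sided ideal and are exactly the integral operators with $L^2$ kernels. Your explicit expansion $\Omega^*\Omega = I + \tilde{L} + \tilde{L}^* + \tilde{L}^*\tilde{L}$ merely spells out what the paper leaves implicit, and the identification with $\Theta$ from \eqref{Theta.def} for $C_n=1$ is, as you note, already contained in the preceding Lemma.
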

\begin{proof}
The claim follows from Theorem \ref{Omega.real} and the well-known
facts that Hilbert-Schmidt operators are *-both-sided ideal
in the space of bounded operators and can be realized as integral ones,
see \cite[Thm.VI.23]{Reed1}.
\end{proof}
\begin{remark}
Slight modification of the definition of $\Omega$
and the proof of Theorem~\ref{Omega.real} yields the analogous result
for operators $\Theta$ defined in \eqref{Theta.def} with arbitrary $C_n$.
It suffices to consider $f_n:=C_n e_n$ instead of $e_n$.
The resulting form is
\begin{equation}
\Theta=J^N + \tilde{K},
\end{equation}
where $J^N$ is defined in \eqref{JND.def}
and $\tilde{K}$ is again a Hilbert-Schmidt operator.
$J^N$~itself, however, can be a sum of a bounded and a Hilbert-Schmidt operator,
as we shall see in examples.
\end{remark}
\begin{proposition}\label{Omega.hol}
Let $\mathcal{S}$ be an open connected set in $\C^2$ such that for all $(c_-,c_+) \in \mathcal {S}$ all eigenvalues of $H$ are simple. Then $\Omega$ and thereby $\Theta$ are bounded holomorphic families in $\mathcal{S}$ with respect to parameters $c_{\pm}$.
\end{proposition}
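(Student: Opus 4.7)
My plan is to combine the holomorphic dependence of individual eigenfunctions $\phi_n$ on the boundary parameters with the uniform Hilbert--Schmidt estimate underlying Theorem~\ref{Omega.real}, and then invoke the standard principle that a locally uniform operator-norm limit of holomorphic operator-valued functions is itself holomorphic.

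Fix $(c_-^0,c_+^0)\in\mathcal{S}$ and work in a small polydisc $\mathcal{N}\subset\mathcal{S}$ about this point. First I verify that each rank-one summand $e_n\langle\phi_n,\cdot\rangle$ in~\eqref{Omega.def} is holomorphic in $(c_-,c_+)$. By Proposition~\ref{H.spectral}(ii), $H$ is a type-$(B)$ holomorphic family, so the rank-one Riesz projection $P_n(c_-,c_+)$ attached to the simple eigenvalue $\lambda_n$ depends holomorphically on the parameters on $\mathcal{N}$; here the asymptotics~\eqref{ln.as} let me fix separating contours $\gamma_n$ uniformly in $n$ once $n\ge N_0$ is large. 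Equivalently, the implicit function theorem applied to~\eqref{EV.eq} yields holomorphy of $l_n$, and then the explicit formula~\eqref{H.EF} shows that the integral kernel $\overline{\phi_n(x)}$ of the antilinear functional $\langle\phi_n,\cdot\rangle$ is holomorphic in $(c_-,c_+)$.

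To upgrade this pointwise holomorphy of the summands to joint holomorphy of the infinite sum, I revisit the estimate on $\|\tilde{L}_{\infty}\|_{\mathrm{HS}}$ in the proof of Theorem~\ref{Omega.real}: every constant there is governed by the asymptotic behaviour of $l_n,\varepsilon_n$ and the uniform boundedness of $c_n,s_n$, all of which are locally uniform in $(c_-,c_+)$. Hence on any compact $K\subset\mathcal{N}$ the tail series converges in Hilbert--Schmidt norm uniformly for $(c_-,c_+)\in K$, while $\tilde{L}_{N_0}$ is a finite holomorphic sum; a locally uniform limit of holomorphic operator-valued maps is holomorphic, so $\Omega$ is holomorphic on $\mathcal{N}$ and, by connectedness of $\mathcal{S}$, on all of $\mathcal{S}$. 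The same Hilbert--Schmidt convergence argument applied directly to the defining series~\eqref{Theta.def} of $\Theta$ yields the analogous conclusion.

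The step I expect to be most delicate is ensuring that a single exceptional index $N_0$ and a uniform choice of the contours $\gamma_n$ can be made on a given compact $K\subset\mathcal{S}$: the simplicity hypothesis on $\mathcal{S}$ precludes eigenvalue collisions, so on $K$ the low eigenvalues remain uniformly separated and Kato's perturbation machinery supplies the required holomorphic dependence of the finitely many low-index Riesz projections, while the uniform tail bound from Theorem~\ref{Omega.real} controls the remainder.
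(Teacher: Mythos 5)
Your argument is essentially correct but takes a genuinely different route from the paper's, whose proof is only a few lines long: the authors verify the criterion of \cite[Sec.~VII.1.1]{Kato-1966}, by which a bounded family of bounded operators is bounded-holomorphic once the matrix elements on a fundamental set are holomorphic; since $\langle e_m,\Omega e_n\rangle=\langle\phi_m,e_n\rangle$ and $H^*$ is a holomorphic family of type~$(B)$ with simple eigenvalues, this weak holomorphy is immediate, and the boundedness of $\Omega$ is already known from Theorem~\ref{Omega.real}. You instead establish norm-holomorphy directly, via termwise holomorphy of the rank-one summands (Riesz projections, or the implicit function theorem applied to \eqref{EV.eq} together with the explicit formula \eqref{H.EF}) plus locally uniform Hilbert--Schmidt convergence of the tail and a Weierstrass-type limit theorem. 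That works, but it forces you to carry exactly the uniformity burdens you flag as delicate --- a single exceptional index $N_0$, uniformly separating contours, and locally uniform constants in the asymptotics \eqref{ln.as} on compacts of $\mathcal{S}$ --- none of which the paper needs to address, since weak holomorphy of each fixed matrix element requires no uniformity in $n$ at all. In exchange, your route yields a stronger, more quantitative statement (locally uniform operator-norm convergence of the series defining $\Omega-U$). One caveat applies to your closing sentence as much as to the paper's laconic ``and thereby $\Theta$'': the summand $\phi_n\langle\phi_n,\cdot\rangle$ in \eqref{Theta.def} is not literally holomorphic in $(c_-,c_+)$, because $\phi_n$ depends antiholomorphically on the parameters while the functional $\langle\phi_n,\cdot\rangle$ depends holomorphically; so ``the same argument applied to the defining series of $\Theta$'' does not go through verbatim, and the holomorphy of $\Theta$ has to be understood through $\Theta=\Omega^*\Omega$ with the adjoint holomorphic in the conjugated parameters.
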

\begin{proof}
We verify the criterion stated in \cite[Sec.~VII.1.1]{Kato-1966}.
We have proved already that $\Omega$ is bounded.
It remains to show that $\langle f, \Omega g\rangle$
is holomorphic for every $f,g$ from a fundamental set of $\H$
that we choose as the orthonormal basis $\{e_n\}_{n=0}^{\infty}$.
$\langle e_m, \Omega e_n\rangle = \langle \phi_m, e_n \rangle$
is holomorphic because $\phi_m$ is an eigenfunction of the operator~$H^*$,
which can be viewed as a holomorphic family of operators of type~$(B)$
with respect to the parameters~$c_{\pm}$.
\end{proof}
\begin{corollary}\label{h.hol}
Assume the hypothesis of Proposition \ref{Omega.hol}.
Then $h:=\Omega H \Omega^{-1}$ is a holomorphic family
of operators in $\mathcal{S}$ with respect to parameters $c_{\pm}$.
\end{corollary}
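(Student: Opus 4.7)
The plan is to verify holomorphy of $h$ in the generalized sense (via the resolvent), following the framework of \cite[Sec.~VII.1]{Kato-1966}. Since $h := \Omega H \Omega^{-1}$ is unbounded but has a bounded inverse of its resolvent (because $\Omega$ is boundedly invertible and $H$ has compact resolvent), the natural strategy is to produce, locally near each base point $(c_-^0, c_+^0) \in \mathcal{S}$, a single $\zeta$ in the common resolvent set such that $(h-\zeta)^{-1}$ is a bounded holomorphic family.

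First I would establish that $\Omega^{-1}$ is also a bounded holomorphic family in $\mathcal{S}$. Proposition~\ref{Omega.hol} only asserts this for $\Omega$, but the standard Neumann-series argument upgrades it: writing
\begin{equation}
\Omega(c) = \Omega(c^0)\bigl[I + \Omega(c^0)^{-1}\bigl(\Omega(c)-\Omega(c^0)\bigr)\bigr],
\end{equation}
the bracketed operator is invertible with convergent Neumann series for $c$ in a small polydisk about $c^0$, so $\Omega(c)^{-1}$ is holomorphic there. (Alternatively, one may invoke \eqref{Omega.inv.def} and note that the eigenfunctions~$\psi_n$ depend holomorphically on $c_\pm$ via Proposition~\ref{H.spectral}(ii), exactly as in the proof of Proposition~\ref{Omega.hol}.)

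Next, fix a base point $(c_-^0, c_+^0) \in \mathcal{S}$ and choose $\zeta \notin \sigma(H(c^0))$. Since $H$ is a holomorphic family of type~(B) (Proposition~\ref{H.spectral}(ii)), the resolvent $(H-\zeta)^{-1}$ exists and is a bounded holomorphic family of $c_\pm$ in some neighborhood of $c^0$, by \cite[Sec.~VII.1.3]{Kato-1966}. In this neighborhood, similarity yields $\sigma(h(c)) = \sigma(H(c))$, so $\zeta \in \rho(h(c))$ as well, and the closedness of $h$ (immediate from $\Omega$ being boundedly invertible and $H$ closed) gives the identity
\begin{equation}
(h-\zeta)^{-1} = \Omega \,(H-\zeta)^{-1}\, \Omega^{-1}.
\end{equation}
The right-hand side is a product of three bounded holomorphic families in $c_\pm$, hence bounded holomorphic; this is precisely the definition \cite[Sec.~VII.1.2]{Kato-1966} of $h$ being a holomorphic family of operators on the neighborhood of $c^0$. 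Since $(c_-^0, c_+^0) \in \mathcal{S}$ was arbitrary, the conclusion holds on all of $\mathcal{S}$.

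The main obstacle is really just bookkeeping: one has to make sure that the resolvent identity above is genuinely valid as an equality of bounded operators (this needs $\Omega^{-1}$ to map $\H$ into $\Dom(H)$ after composition with $(H-\zeta)^{-1}$, and $\Omega$ to map that into $\Dom(h)$, all of which follow at once from $\Dom(h) = \Omega \,\Dom(H)$), and that the holomorphy of $\Omega^{-1}$ is properly justified. No deep spectral-theoretic input beyond Proposition~\ref{H.spectral}(ii) and Proposition~\ref{Omega.hol} is required.
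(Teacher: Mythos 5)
Your argument is correct and fills in exactly the standard reasoning the paper leaves implicit: the corollary is stated without proof, as an immediate consequence of Proposition~\ref{Omega.hol} and the type~(B) holomorphy of $H$, and the intended justification is precisely your resolvent identity $(h-\zeta)^{-1}=\Omega\,(H-\zeta)^{-1}\,\Omega^{-1}$ together with the Neumann-series holomorphy of $\Omega^{-1}$. No gaps; your careful bookkeeping of domains and of the closedness of $h$ is sound.
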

Since the operator $H$ is a holomorphic family of type $(B)$,
\ie~it is naturally defined via quadratic forms
with the domain $W^{1,2}(-a,a)$
independent of the parameters $c_{\pm}$,
$h$~is expected to possess a similar property.
To prove it, we have to particularly show that
the associated quadratic forms
corresponding to different values of~$c_\pm$
have the same domain,
which is not guaranteed by Corollary \ref{h.hol}.
To this end we analyse the quadratic form associated to $h$,
where we set $e_n:=\chi_n^N$ in the definition of $\Omega$.
\begin{theorem}\label{L*M}
Let all eigenvalues of $H$ be simple and let $e_n:=\chi_n^N$ in \eqref{Omega.def}.
Then $\Omega=I+L$ and $\Omega^{-1}=I+M$,
where $L$, $M$ are Hilbert-Schmidt operators.
$\Omega, \Omega^*, \Omega^{-1},(\Omega^{-1})^*$
are bounded operators on $W^{1,2}(-a,a)$ and $W^{2,2}(-a,a)$.
Furthermore, the following estimates hold for all
$\phi \in  W^{1,2}(-a,a)$ and arbitrary $\delta > 0$:
\begin{equation}\label{L*M.ineq}
\begin{aligned}
\|(L^* \phi)'\|^2 &
\leq C\left( \delta \;\! \|\phi'\|^2 +  \delta^{-2} \|\phi\|^2 \right), \\
\|(M \phi)'\|^2 &
\leq C\left( \delta \;\! \|\phi'\|^2 +  \delta^{-2} \|\phi\|^2 \right),
\end{aligned}
\end{equation}
with $C$ being constants not dependent on $\delta$ and $\phi$.
\end{theorem}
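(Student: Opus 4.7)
The decomposition $\Omega = I + L$ with $L$ Hilbert--Schmidt is provided by Theorem~\ref{Omega.real}. For the companion identity $\Omega^{-1} = I + M$, I would rearrange $(I+L)(I+M) = I$ into $M = -L\Omega^{-1}$; as the product of a Hilbert--Schmidt and a bounded operator, $M$ is Hilbert--Schmidt by the two-sided ideal property.

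The analytic core is a refined asymptotic expansion of $\tilde\phi_n' = (\phi_n - \chi_n^N)'$. Differentiating the representation~\eqref{phi.rewritten}, using $\overline{l_n} = k_n + \overline{\varepsilon_n}$ with $\overline{\varepsilon_n} = O(1/n)$ from~\eqref{ln.as}, and applying elementary product-to-sum identities, I would derive
\[
\tilde\phi_n'(x) = f_n(x)\,\chi_n^N(x) + g_n(x)\,\chi_n^D(x),
\]
where $f_n$ converges uniformly on $[-a,a]$ to an explicit bounded limit $f_\infty(x)$ at rate $O(1/n)$, while $\|g_n\|_\infty = O(1/n)$. An entirely analogous expansion holds for $\tilde\psi_n' = (\psi_n - \chi_n^N)'$, using~\eqref{H.EF} together with the easily verified $A_n = 1 + O(1/n)$.

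Using the Neumann basis expansion $\phi = \sum_n \langle\chi_n^N,\phi\rangle\chi_n^N$ together with the split above, I would write
\[
(L^*\phi)'(x) = f_\infty(x)\,\phi(x) + \sum_n \langle\chi_n^N,\phi\rangle\bigl[(f_n - f_\infty)(x)\chi_n^N(x) + g_n(x)\chi_n^D(x)\bigr]
\]
up to a finite-rank exceptional term from the low-$n$ eigenvalues. The first piece is dominated in $L^2$ by $\|f_\infty\|_\infty\|\phi\|$; the sum is controlled by the triangle inequality and Cauchy--Schwarz, using $\|f_n - f_\infty\|_\infty + \|g_n\|_\infty \leq C/n$ and Bessel's inequality $\sum|\langle\chi_n^N,\phi\rangle|^2 \leq \|\phi\|^2$, which yields $\|(L^*\phi)'\|^2 \leq C\|\phi\|^2$; the displayed estimate~\eqref{L*M.ineq} then follows immediately (e.g.\ from $C\|\phi\|^2 \leq C\delta^{-2}\|\phi\|^2$ for $\delta \leq 1$). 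The estimate for $(M\phi)'$ is identical with $\tilde\psi_n'$ replacing $\tilde\phi_n'$. Finally, boundedness of $\Omega, \Omega^*, \Omega^{-1}, (\Omega^{-1})^*$ on $W^{1,2}$ and $W^{2,2}$ follows by the analogous analysis of the undifferentiated $\tilde\phi_n, \tilde\psi_n$ and of their second derivatives, with integration by parts against the oscillatory $\chi_n^{N}, \chi_n^D$ providing the required summability; the $W^{2,2}$ case is the most delicate and exploits the operator-domain boundary conditions satisfied by $\phi$. The principal obstacle is the bookkeeping of the second paragraph: the leading term of $\tilde\phi_n'$ does not vanish as $n\to\infty$, so a naive bound $\|\tilde\phi_n'\|_\infty \leq C$ would create a divergent sum $\sum|\langle\chi_n^N,\phi\rangle|$; it is the refinement to a single $n$-independent limit function $f_\infty$ with uniform error $O(1/n)$ that legitimizes the resummation into $f_\infty\phi$ and cleanly separates the non-small behaviour from the genuinely small corrections.
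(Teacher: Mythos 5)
Your argument is correct, and for the central estimate \eqref{L*M.ineq} it takes a genuinely different route from the paper's. The paper also reduces everything to the expansion \eqref{phin'} of $\tilde\phi_n'$, but it then bounds the problematic double sum $\sum_{m,n}\overline{\alpha_m}\alpha_n\langle\tilde\phi_m',\tilde\phi_n'\rangle$ crudely by $C\big(\sum_n|\alpha_n|\big)^2$ (estimating the inner products by sup norms, thus discarding all oscillation), and only afterwards recovers control by interpolating the $l^1$-norm of $(\alpha_n)$ between $\|\phi\|$ and $\|\phi'\|$ via a generalized H\"older inequality followed by Young's inequality with exponent $b=2/3$; this is precisely where the $\delta\|\phi'\|^2+\delta^{-2}\|\phi\|^2$ structure comes from. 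You instead isolate the exact $n\to\infty$ limit $f_\infty$ of the $\chi_n^N$-coefficient of $\tilde\phi_n'$ (which is indeed an affine function of $x$ determined by $c_\pm$, with uniform error $\mathcal{O}(1/n)$, as one checks from $\phi_n'=-\overline{l_n}\,a^{-1/2}\sin(\overline{l_n}(x+a))-\overline{c_-}\,a^{-1/2}\cos(\overline{l_n}(x+a))$ and \eqref{ln.as}) and resum $\sum_n\alpha_nf_\infty\chi_n^N=f_\infty\phi$, so that only $\mathcal{O}(1/n)$ remainders are left for Cauchy--Schwarz. This buys a strictly stronger conclusion, $\|(L^*\phi)'\|\le C\|\phi\|$, i.e.\ $L^*$ and $M$ map $L^2$ boundedly into $W^{1,2}$ (consistent with the explicit kernels \eqref{LM.ker}, whose $x$-derivatives are bounded kernels plus a multiple of $\delta(y-x)$), and \eqref{L*M.ineq} follows for $\delta\le 1$ -- which is the only regime used in Corollary~\ref{cor.th}, and also the only regime the paper's own Young-inequality step actually covers. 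Two small remarks: your identity $M=-L\Omega^{-1}$ is exactly the paper's argument for $M$ being Hilbert--Schmidt; and your aside that the $W^{2,2}$-boundedness ``exploits the operator-domain boundary conditions satisfied by $\phi$'' is unnecessary -- the paper gets it by a plain Hilbert--Schmidt-norm computation in the weighted basis $\chi_n^N/\sqrt{1+n^2+n^4}$ using only $\|\tilde\phi_n''\|=\mathcal{O}(n)$, with no boundary conditions on the argument.
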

\begin{proof}
We set $a:=\pi/2$ as in the proof of Theorem~\ref{Omega.real}.
$M$ is Hilbert-Schmidt since $I=\Omega \Omega^{-1}=I + L + M + L M $
and $L$ is Hilbert-Schmidt.

We consider $\Omega^*$ at first. Following the proof of Theorem \ref{Omega.real},
$L^*$ can be written as
\begin{equation}\label{L*.def}
L^*f=\sum_{k=0}^{\infty} \tilde{\phi}_k \langle \chi_k^N, f \rangle,
\end{equation}
where $\tilde{\phi}_k:=\phi_k-\chi_k^N$ and $f\in \H$. We show that $L^*$ is bounded on $W^{1,2}(-a,a)$. We estimate the Hilbert-Schmidt norm of $L^*$ on $W^{1,2}(-a,a)$ with help of the orthonormal basis $f_n:=\chi_n^N/\sqrt{1+n^2}$. In fact, it suffices to estimate:
\begin{equation}\label{L*.HS}
\sum_{n=0}^{\infty} \langle (L^* f_n)', (L^* f_n)' \rangle = \sum_{n=0}^{\infty} \frac{1}{1+n^2} \| \tilde{\phi}'_n \|^2
\end{equation}
where (recall~\eqref{phi.rewritten} and~\eqref{cn.sn.def})
\begin{equation}\label{phin'}
\begin{split}
\tilde{\phi}'_n &= -n \overline{\varepsilon_n}^2\, \chi_n^D c_n - \overline{\varepsilon_n}^2\, \chi_n^N s_n
- n \overline{\varepsilon_n} \, \chi_n^N s_n - \overline{\varepsilon_n} \, \chi_n^D (1+ \overline{\varepsilon_n}^2 \, c_n ) \\
&\quad + \overline{c_-} \left[ \chi_n^N (1 + \overline{\varepsilon_n}^2 \, c_n ) - \overline{\varepsilon_n} \, \chi_n^D s_n  \right].
\end{split}
\end{equation}
Using the asymptotic properties of~$\varepsilon_n$ and
the uniform boundedness of~$c_n,s_n$
(see \eqref{ln.as} and \eqref{cn.sn.def}, respectively)
together with the normalization of $\chi_n^{\iota}$,
we conclude that $\|\tilde{\phi}'_n\| \leq C$
uniformly in~$n$.
Therefore \eqref{L*.HS} is finite.

Using the same technique,
we can show that the Hilbert-Schmidt norm
of~$L^*$ in $W^{2,2}(-a,a)$ is finite.
To this end we select the basis $\chi_n^N/\sqrt{1+n^2+n^4}$,
the rest is based on $\|\tilde{\phi}''_n\|=\mO(n)$ as $n\to\infty$.

Let us now establish the inequalities~\eqref{L*M.ineq}.
Consider $\phi \in W^{1,2}(-a,a)$,
its basis decomposition $\phi=\sum_{n=0}^{\infty}\alpha_n \chi_n^N$,
and the identity
\begin{equation}
\sum_{n=0}^{\infty}|n\alpha_n|^2 = \|\phi'\|^2.
\end{equation}
Hence,
\begin{equation}
\|(L^*\phi)'\|^2 = \sum_{m,n=0}^{\infty} \overline{\alpha_m} \alpha_n \langle \tilde{\phi}'_m, \tilde{\phi}'_n \rangle,
\end{equation}
and having the explicit form of $\tilde{\phi}'_n$, see \eqref{phin'},
we have to estimate several terms.
We show the technique only for one term,
the estimate of remaining terms is analogous.
First, using the uniform boundedness of~$\|c_n\|,\|s_n\|$,
the asymptotics $\varepsilon_n=\mathcal{O}(n^{-1})$
and the uniform boundedness of~$\|\chi_n^N\|_\infty$,
it is easy to see that
\begin{equation*}
\begin{split}
&\sum_{m,n=0}^{\infty} m\, n\, |\alpha_m| |\alpha_n| |\varepsilon_m| |\varepsilon_n| |\langle \chi_m^N s_m, \chi_n^N s_n  \rangle|  \leq
C \left(\sum_{n=1}^{\infty} |\alpha_n|\right)^2  \\
\end{split}
\end{equation*}
holds with some positive constant~$C$.
It remains to estimate the~$l^1$-norm of~$\alpha_n$
by the $l^2$-norms of~$\alpha_n$ and $n\alpha_n$
(which equal $\|\phi\|$ and $\|\phi'\|$, respectively).
This is rather algebraic:
\begin{equation*}
\begin{aligned}
  \left( \sum_{n=1}^{\infty} |\alpha_n| \right)^2
  &= \left(
  \sum_{n=1}^{\infty}  \big(|\alpha_n| \, n\big)^b \, |\alpha_n|^{1-b} \, n^{-b}
  \right)^2
  \\
  & \leq \left( \sum_{n=1}^{\infty} |\alpha_n|^2 \, n^2  \right)^{b}
  \left( \sum_{n=1}^{\infty} |\alpha_n|^{2} \right)^{1-b}
  \left( \sum_{n=1}^{\infty} n^{-2b} \right)
  \\
  & \leq C_b \, \|\phi'\|^{2b} \, \|\phi\|^{2(1-b)}
  \\
  & \leq C_b \left(
  b \, \delta \, \|\phi'\|^{2} + (1-b) \, \delta^{-\frac{b}{1-b}} \, \|\phi\|^{2}
  \right),
\end{aligned}
\end{equation*}
with any $b,\delta \in (0,1)$.
Here the first inequality follows by the generalized H\"older inequality
and the last one is a consequence of the Young inequality.
The exponent~$b$ is chosen in such a way that $2b>1$,
so that the sum of $n^{-2b}$ (denoted by~$C_b$) converges.
If we put $b=2/3$, we obtain the inequality in the claim.

One can show, using the asymptotics \eqref{ln.as}, that
it follows from the normalization requirement
$\langle \phi_n, \psi_n\rangle=1$ that $A_n$,
the normalization constants of $\psi_n$, see \eqref{H.EF},
satisfy $A_n=1+\mO(n^{-1})$.
Then the claims for $\Omega^{-1}$
and $M$ can be derived in the same manner.

To justify that $\Omega$ and $(\Omega^{-1})^*$ are bounded on
$W^{1,2}(-a,a)$ and $W^{2,2}(-a,a)$,
it suffices to realize that $\Omega^{-1}$ and $\Omega^*$ are invertible
because they are invertible in $L^2(-a,a)$
and the inverse is bounded because of the form identity
plus compact operator on considered Sobolev spaces.
\end{proof}

\begin{corollary}\label{cor.th}
Assume the hypotheses of Theorem~\ref{L*M}.
Then $h:=\Omega H \Omega^{-1}$
is a holomorphic family of operators of type~$(B)$
with respect to~$c_\pm$.
The associated quadratic form~$t_h$,
in the sense of the representation theorem \cite[Thm. VI.2.1]{Kato-1966}, reads
\begin{equation}\label{th.psi}
\begin{split}
t_h[\psi]& =\|\psi'\|^2 + \langle (L^*\psi)', \psi'\rangle + \langle \psi', (M\psi)'\rangle + \langle (L^*\psi)', (M\psi)'\rangle  \\
& \quad + c_+ \left[ \big(\overline{\psi(a)} + \overline{(L^*\psi)(a)}\big)
\big(\psi(a) + (M\psi)(a)\big)    \right] \\
& \quad - c_- \left[ \big(\overline{\psi(-a)} + \overline{(L^*\psi)(-a)}\big)
\big(\psi(-a) + (M\psi)(-a)\big)    \right], \\
\Dom(t_h)&=W^{1,2}(-a,a).
\end{split}
\end{equation}
\end{corollary}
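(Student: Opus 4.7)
The plan is to first derive the explicit expression~\eqref{th.psi} from the factorization $h = \Omega H \Omega^{-1}$, then verify that this expression defines a closed sectorial form with the fixed domain $W^{1,2}(-a,a)$, and finally combine this with the bounded holomorphy already established in Proposition~\ref{Omega.hol} and Corollary~\ref{h.hol} to conclude type~(B).

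For the first step I would compute the sesquilinear form on a core. For $\phi,\psi \in \Omega(\Dom(H))$ we have $\Omega^{-1}\psi \in \Dom(H)$, and
\[
\langle \phi, h\psi\rangle
= \langle \phi, \Omega H \Omega^{-1}\psi\rangle
= \langle \Omega^*\phi, H\Omega^{-1}\psi\rangle
= t_H(\Omega^*\phi, \Omega^{-1}\psi).
\]
Substituting $\Omega^* = I + L^*$ and $\Omega^{-1} = I + M$ and expanding $t_H$ as in~\eqref{tH.def} yields~\eqref{th.psi} after grouping the bulk and boundary contributions. By Theorem~\ref{L*M}, each inner product and each boundary evaluation on the right-hand side of~\eqref{th.psi} is continuous in the $W^{1,2}$-norm of $\psi$, so the expression extends by continuity from the dense subspace $\Omega(\Dom(H))$ to all of $W^{1,2}(-a,a)$.

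Closedness and sectoriality of the extended $t_h$ on $W^{1,2}(-a,a)$ form the main technical step, and this is where the estimates~\eqref{L*M.ineq} do the essential work. Taking $\|\psi'\|^2$ as the reference form (which is closed and nonnegative on $W^{1,2}$), I would bound each bulk cross-term via Cauchy--Schwarz combined with~\eqref{L*M.ineq} by $\varepsilon\|\psi'\|^2 + C_\varepsilon\|\psi\|^2$ with $\varepsilon > 0$ arbitrarily small, after appropriate tuning of $\delta$. The boundary terms are treated with the one-dimensional trace inequality $|f(\pm a)|^2 \le \eta\|f'\|^2 + C_\eta\|f\|^2$ applied to the $W^{1,2}$-functions $\psi + L^*\psi$ and $\psi + M\psi$, followed again by~\eqref{L*M.ineq}. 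The resulting perturbation is thus relatively form-bounded with bound zero, so a KLMN-type argument \cite[Sec.~VI.1.6]{Kato-1966} shows that $t_h$ is closed and sectorial on $W^{1,2}(-a,a)$; the representation theorem then produces an m-sectorial operator which, agreeing with $h$ on the core $\Omega(\Dom(H))$ and sharing with $h$ a nonempty resolvent set (inherited from $H$ through the bounded similarity), must coincide with $h$.

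Finally, since the form domain $W^{1,2}(-a,a)$ is manifestly independent of $c_\pm$ and every term in~\eqref{th.psi} depends holomorphically on $c_\pm$ --- explicitly in the boundary coefficients, and through $L$ and $M$ via Proposition~\ref{Omega.hol} --- the family $\{t_h\}$ constitutes a holomorphic family of closed sectorial forms with common domain, which is exactly the defining property of a holomorphic family of operators of type~(B). The main obstacle I anticipate is the careful balancing in the third step: controlling the boundary contributions $|(L^*\psi)(\pm a)|$ and $|(M\psi)(\pm a)|$ with arbitrarily small weight on $\|\psi'\|^2$ requires combining the Sobolev trace inequality with the $\delta$-tunable~\eqref{L*M.ineq} in the right order, since these boundary pieces are not directly absorbed by the bulk reference form.
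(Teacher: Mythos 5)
Your proposal is correct and follows essentially the same route as the paper: both treat $u[\psi]:=t_h[\psi]-\|\psi'\|^2$ as a perturbation of $\|\psi'\|^2$ with relative bound zero via the estimates~\eqref{L*M.ineq} (plus the trace inequality for the boundary terms), invoke the representation theorem to get an m-sectorial operator, and identify it with $h$ through the identity $t_H(\Omega^*v,\Omega^{-1}u)=t_h(v,u)$ made legitimate by the $W^{1,2}$-boundedness of $\Omega$, $\Omega^*$, $\Omega^{-1}$, $(\Omega^{-1})^*$ from Theorem~\ref{L*M}. The only cosmetic difference is that you close the identification by ``agreement on a core plus a common resolvent point'' whereas the paper verifies the form identity on all of $W^{1,2}(-a,a)$ directly; both are standard and equivalent here.
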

\begin{proof}
The form~$t_h$ defined in~\eqref{th.psi} is sectorial and closed
due to the perturbation result \cite[Thm.~VI.1.33]{Kato-1966},
regarding $u[\psi]:=t_h[\psi]-\|\psi'\|^2$
as a perturbation of $t_0[\psi]:=\|\psi'\|^2$.
Indeed, the inequalities \eqref{L*M.ineq} applied on~$u[\psi]$
yield that~$u$ is $t_0$-bounded with $t_0$-bound~$0$.
Therefore, due to the first representation theorem
\cite[Thm.~VI.2.1]{Kato-1966},
there is a unique m-sectorial operator associated with~$t_h$.
Let us denote it by~$\tilde{h}$.
Our objective is to show that $\tilde{h}=h$.

Using the definition of~$h$ by the similarity transformation,
\ie\ $h=\Omega H \Omega^{-1}$,
and the fact that~$H$ is associated to~$t_H$,
we know that the domain of~$h$ are functions~$u$ such that,
firstly, $\Omega^{-1}u \in W^{1,2}(-a,a)$
and, secondly, there exists $w \in L^2(-a,a)$ such that
\begin{equation}
  t_H( \Omega^* v, \Omega^{-1} u ) = (v,w)
\end{equation}
for all $v$ such that $\Omega^* v \in W^{1,2}(-a,a)$.
However, by Theorem~\ref{L*M},
$\Omega$, $\Omega^*$, $\Omega^{-1}$, $(\Omega^*)^{-1}$
are bounded on $W^{1,2}(-a,a)$
and it is easy to check that
the identity
\begin{equation}
 t_H( \Omega^* v, \Omega^{-1} u ) = t_h(v,u)
\end{equation}
holds for all $u,v \in W^{1,2}(-a,a)$.
Consequently, the operators $\tilde{h}$ and $h$ indeed coincide.
\end{proof}
\begin{remark}
We remark that the boundedness of $\Omega$, $\Omega^*$, $\Omega^{-1}$
and $(\Omega^{-1})^*$ in $W^{2,2}(-a,a)$
was not used in the proof Corollary \ref{cor.th}.
Nevertheless, this property is useful if we analyse
the domain of $h$ directly from the relation $h=\Omega H \Omega^{-1}$.
It follows that $\Dom(h)$ consists of functions $\psi$ from $W^{2,2}(-a,a)$
satisfying boundary conditions
$(\Omega^{-1} \psi)'(\pm a) + c_{\pm}  (\Omega^{-1} \psi)(\pm a) = 0$.
\end{remark}
%

\section{Closed formulae in \texorpdfstring{$\PT$}{PT}-symmetric cases}
\label{sec:examples}

We present closed formulae of operators $\Theta$, $\Omega$ and~$h$
corresponding to~$H$
with special $\PT$-symmetric choice of boundary conditions,
$c_{\pm}:=\ii \alpha,$ with $\alpha \in \R$.
This case has already been studied in a similar context
in \cite{Krejcirik-2006-39, Krejcirik-2008-41a},
where the first formulae of the metric~$\Theta$ were given.
We substantially generalize these results here.

We essentially rely on the original idea of~\cite{Krejcirik-2008-41a}
to ``use the spectral theorem backward'' to sum up the infinite series
appearing in the definition of~$\Theta$ in~\eqref{Theta.def}.
The attempts to find $\Omega$ as the square root of $\Theta$
using the holomorphic and self-adjoint calculus are contained
in \cite{Zelezny-2011-50,Zelezny-MT},
however, only approximations of the resulting similar self-adjoint
operator~$h$ were found there.
The main novelty of the present approach comes from
the more general factorization~\eqref{Theta.dec} with~\eqref{Omega.def},
which enables us to obtain exact results.
Formulae contained in this section are obtained
by tedious although straightforward calculations
that we do not present entirely.

Finally, we present the metric operator for $H$
with general $\PT$-symmetric boundary conditions,
$c_{\pm}:=\ii \alpha \pm \beta$.
In this case, the eigenvalues are no longer explicitly known,
nevertheless, the experience from previous examples and formulation
of partial differential equation together
with a set of ``boundary conditions" for the kernel
of the integral operator provide the correct result.

\subsection{Reduction to finding a Neumann metric}
%
We start with the following fundamental result.
\begin{proposition}\label{Ex.Theta.0}
Let $c_{\pm}:=\ii \alpha,$ with $\alpha \in \R$.
Then the operator $\Theta$ defined in~\eqref{Theta.def}
has the form
\begin{equation}\label{Ex.Theta.1}
\Theta=J^N + C_0^2 \;\! \theta_1 + J^N\theta_2 + J^D \theta_3,
\end{equation}
where $J^{\iota}$, with $\iota\in\{D,N\}$,
are defined in \eqref{JND.def}, $C_0 > 0$,
and $\theta_i$ are integral operators with kernels:
\begin{equation}\label{Ex.Theta.2}
\begin{aligned}
\theta_1(x,y) &:= \frac{\ii}{a} \, e^{ \frac{\ii \alpha}{2}(x-y)} \sin\left( \frac{\alpha}{2}(x-y) \right), \\
\theta_2(x,y) &:= \frac{\ii \alpha}{2 a} \, \big[ y - a\, \sgn (y-x) \big],   \\
\theta_3(x,y) &:= \frac{\alpha^2}{2a} \left(  a^2 - xy \right)
- \frac{\ii \alpha}{2a} \, x - \frac{\ii \alpha}{2} \,
\big[ 1 - \ii \alpha (y-x)  \big] \sgn(y-x).
\end{aligned}
\end{equation}
$\Theta$ is the metric operator for $H$, see Definition \ref{Theta.definition},
if, and only if, $\alpha \neq k_n$ for every $n \in \N$.
\end{proposition}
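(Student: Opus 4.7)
The plan is to substitute the explicit eigenfunctions of Proposition~\ref{H.PT.EV} (case $\beta=0$) into the defining series~\eqref{Theta.def} and regroup the resulting terms so as to match~\eqref{Ex.Theta.1}. I isolate the $n=0$ contribution first: with $\phi_0(x)=(2a)^{-1/2}e^{\ii\alpha(x+a)}$, the rank-one operator $C_0^2\phi_0\langle\phi_0,\cdot\rangle$ has integral kernel $\tfrac{C_0^2}{2a}e^{\ii\alpha(x-y)}$, while the $n=0$ piece of $J^N$ is $C_0^2\chi_0^N\langle\chi_0^N,\cdot\rangle$ with the constant kernel $\tfrac{C_0^2}{2a}$. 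Their difference reduces, via $e^{\ii\alpha(x-y)}-1=2\ii\,e^{\ii\alpha(x-y)/2}\sin(\alpha(x-y)/2)$, to exactly $C_0^2\theta_1(x,y)$.

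For $n\geq 1$ I expand, using $\overline{\ii\alpha/k_n}=-\ii\alpha/k_n$,
\begin{equation*}
\phi_n\langle\phi_n,\cdot\rangle = \chi_n^N\langle\chi_n^N,\cdot\rangle + \tfrac{\alpha^2}{k_n^2}\chi_n^D\langle\chi_n^D,\cdot\rangle - \tfrac{\ii\alpha}{k_n}\chi_n^N\langle\chi_n^D,\cdot\rangle + \tfrac{\ii\alpha}{k_n}\chi_n^D\langle\chi_n^N,\cdot\rangle.
\end{equation*}
Summing $C_n^2$ times the first piece over $n\geq 1$ and adding the previously isolated $n=0$ contribution reconstructs $J^N$. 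The residual $\Theta-J^N-C_0^2\theta_1$ is then the $n\geq 1$ sum, weighted by $C_n^2$, of the three remaining pieces, and the task becomes to identify it as $J^N\theta_2+J^D\theta_3$.

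Since $J^N$ and $J^D$ (recall $\chi_0^D=0$) act by series in $\chi_n^N\langle\chi_n^N,\cdot\rangle$ and $\chi_n^D\langle\chi_n^D,\cdot\rangle$ respectively, the identification reduces to verifying, for all $n\geq 1$, the pointwise relations
\begin{equation*}
\int_{-a}^a\chi_n^N(x)\theta_2(x,y)\,\dd x = -\tfrac{\ii\alpha}{k_n}\chi_n^D(y), \qquad \int_{-a}^a\chi_0^N(x)\theta_2(x,y)\,\dd x = 0,
\end{equation*}
\begin{equation*}
\int_{-a}^a\chi_n^D(x)\theta_3(x,y)\,\dd x = \tfrac{\ii\alpha}{k_n}\chi_n^N(y)+\tfrac{\alpha^2}{k_n^2}\chi_n^D(y).
\end{equation*}
Both $\theta_2$ identities are routine: for $n\geq 1$ one uses the mean-zero property of $\chi_n^N$ together with the primitive relation $\int_{-a}^y\chi_n^N(x)\,\dd x = k_n^{-1}\chi_n^D(y)$; for $n=0$ it suffices that $\int_{-a}^a\sgn(y-x)\,\dd x=2y$. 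For $\theta_3$ the decisive observation is that its $\alpha^2$-part $\tfrac{\alpha^2}{2a}(a^2-xy)-\tfrac{\alpha^2}{2}|x-y|$ coincides with $\alpha^2\mathcal{G}_D^0(x,y)$ from~\eqref{res.D0}, so its action on $\chi_n^D$ yields immediately $\tfrac{\alpha^2}{k_n^2}\chi_n^D(y)$; the linear-in-$\alpha$ pieces reduce to the elementary integrals $\int_{-a}^a x\sin(k_n(x+a))\,\dd x$ and $\int_{-a}^a\sgn(y-x)\sin(k_n(x+a))\,\dd x$, whose parity-dependent parts cancel to leave exactly $(\ii\alpha/k_n)\chi_n^N(y)$.

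The concluding metric-operator claim follows from Subsection~\ref{metric.c}: the $\Theta$ defined by~\eqref{Theta.def} is a metric operator for $H$ precisely when $\{\phi_n\}$ is a Riesz basis and $\sigma(H)\subset\R$, which by Proposition~\ref{H.PT.EV} (case $\beta=0$) is equivalent to $\alpha\neq k_n$ for every $n\in\N$. I expect the main obstacle to be the $n=0$ bookkeeping -- ensuring that no contribution is double counted as pieces are redistributed among $J^N$, $\theta_1$, $\theta_2$ and $\theta_3$ -- together with the parity cancellation in the $\theta_3$ verification; the underlying trigonometric integrals themselves are standard.
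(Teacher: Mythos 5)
Your proposal is correct, and it reaches the formula by a route that differs from the paper's in its second half. The first step is identical: you expand $\phi_n\langle\phi_n,\cdot\rangle$ using the explicit eigenfunctions~\eqref{H.beta.0.EF} and peel off $J^N$ and the rank-one $n=0$ correction $C_0^2\theta_1$, exactly as in the paper's intermediate formula~\eqref{Ex.Theta.3}. From there the paper proceeds \emph{forward}: it recognizes the residual sums, via the identities~\eqref{p.ND.id} and the functional calculus, as $\alpha J^N p(-\Delta_D)^{-1}$ and $J^D[\alpha^2(-\Delta_D)^{-1}+\alpha p^*(-\Delta_N^\perp)^{-1}]$ (the operator form~\eqref{operator.form}), and then substitutes the Green's functions of Section~\ref{subsec.not} to \emph{derive} the kernels $\theta_2,\theta_3$. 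You instead work \emph{backward}: taking the stated kernels as given, you verify that $\theta_2^*$ and $\theta_3^*$ map each $\chi_n^N$, $\chi_n^D$ to the correct multiples of $\chi_n^D$, $\frac{\ii\alpha}{k_n}\chi_n^N+\frac{\alpha^2}{k_n^2}\chi_n^D$, which suffices because $J^N\theta_2$ and $J^D\theta_3$ are built term by term from exactly these actions. Your integrals check out (including the cancellation of the $(1+(-1)^n)$ terms in the $\theta_3$ computation and the vanishing of the $n=0$ term of $J^N\theta_2$), and your observation that the $\alpha^2$-part of $\theta_3$ is $\alpha^2\mathcal{G}_D^0$ is in effect a local rediscovery of the paper's resolvent identification. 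The paper's route has the advantage of explaining where the kernels come from and of yielding the reusable operator form~\eqref{operator.form} (which the paper exploits later for the $\mathcal{C}$-operator computation); your route is more self-contained as a proof of the stated identity but presupposes the answer. The final claim about when $\Theta$ is a metric operator is argued the same way in both: reality of the spectrum for $\beta=0$ plus simplicity iff $\alpha\neq k_n$, via Proposition~\ref{H.PT.EV} and the discussion in Subsection~\ref{metric.c}.
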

\begin{proof}
Using the explicit form~\eqref{H.beta.0.EF} of functions $\phi_n$
and the definition~\eqref{Theta.def} of~$\Theta$,
we obtain
\begin{equation}\label{Ex.Theta.3}
\begin{aligned}
\Theta &= \sum_{n=0}^\infty C_n^2 \chi_n^N \langle \chi_n^N,\cdot \rangle + C_0^2 \left( \phi_0 \langle \phi_0 ,\cdot\rangle -  \chi_0^N \langle \chi_0^N,\cdot \rangle \right)
\\
& \quad + \alpha^2 \sum_{n=1}^\infty \frac{C_n^2}{k_n^2}\chi_n^D \langle  \chi_n^D,\cdot \rangle +\ii \alpha \sum_{n=1}^\infty \frac{C_n^2}{k_n}\chi_n^D \langle \chi_n^N,\cdot \rangle - \ii \alpha \sum_{n=1}^\infty \frac{C_n^2}{k_n} \chi_n^N \langle \chi_n^D,\cdot \rangle.
\end{aligned}
\end{equation}
Employing the operators $J^{\iota}$ and $p,p^*$ introduced in \eqref{JND.def}
and \eqref{pp*.def}, respectively,
and relations \eqref{p.ND.id} we obtain:
\begin{equation}\label{Ex.Theta.4}
\begin{split}
\Theta  & =  J^N  \sum_{n=0}^{\infty}\chi_{n}^{N} \langle \chi_{n}^{N},\cdot \rangle +
C_0^2 \left( \phi_0 \langle \phi_0 ,\cdot\rangle -  \chi_0^N \langle \chi_0^N,\cdot \rangle \right)  \\
& \quad + \alpha J^N  p \sum_{n=1}^{\infty} \frac{1}{k_{n}^{2}} \chi_{n}^{D} \langle \chi_{n}^{D},\cdot \rangle \\
& \quad + J^D \left( \alpha^{2} \sum_{n=1}^{\infty}\frac{1}{k_{n}^{2}} \chi_{n}^{D} \langle \chi_{n}^{D},\cdot \rangle +
\alpha p^* \sum_{n=1}^{\infty}\frac{1}{k_{n}^{2}}\chi_{n}^{N} \langle \chi_{n}^{N},\cdot \rangle \right).
\end{split}
\end{equation}
It follows from the functional calculus for self-adjoint operators
that \eqref{Ex.Theta.4} can be written as
\begin{equation}\label{operator.form}
\begin{aligned}
\Theta & =J^N +  C_0^2 \left( \phi_0 \langle \phi_0,\cdot \rangle -
\chi_0^N \langle \chi_0^N, \cdot \rangle \right)  +
\alpha J^N  p\;\!(-\Delta_D)^{-1}
\\
& \quad + J^D
\big[
\alpha^2 (-\Delta_D)^{-1}+ \alpha \;\! p^* (-\Delta_N^\perp)^{-1}
\big].
\end{aligned}
\end{equation}
By inserting the explicit integral kernels of the resolvents,
see Section \ref{subsec.not}, we obtain the formula~\eqref{Ex.Theta.1}
with~\eqref{Ex.Theta.2}.

To ensure that such~$\Theta$ represents as metric operator,
we recall that the spectrum of~$H$ is always real,
see Proposition~\ref{H.PT.EV}.
Moreover, it is simple if, and only if,
the condition in the last claim is satisfied.
\end{proof}
\begin{remark}
The formula \eqref{Ex.Theta.1} can be rewritten in terms of
the operator~$J^N$ only. Indeed, it is possible to show that
\begin{equation}
J^D = p^*J^N p \;\! (-\Delta_D)^{-1}.
\end{equation}
The final result is then
\begin{equation}\label{Ex.Theta.5}
\Theta= J^N + C_0^2 \theta_1 + J^N \theta_2 + p^*J^N \theta_4,
\end{equation}
where $\theta_4 := p \;\! (-\Delta_D)^{-1}\theta_3$
is an integral operator with kernel
\begin{equation}\label{theta4}
\begin{aligned}
\theta_4(x,y) & = \frac{\alpha}{12a}\bigg( y^2 (3-\ii \alpha y)
+3x^2 (1-\ii \alpha y)+2a^2 \big[1+\ii \alpha(3x - y)\big]\bigg) \\
& \quad - \frac{1}{4} \alpha \bigg( 2 - \ii \alpha(y-x)\bigg)(y-x) \sgn (y-x).
\end{aligned}
\end{equation}
Note that the expression~\eqref{theta4} is a result
of a rather lengthy computation.
\end{remark}

Any metric operator for $H$ in Proposition \ref{Ex.Theta.0}
can be obtained by determining~$J^N$ for given constants $C_n$.
Thus we managed to transform the problem of constructing the metric operators
for non-self-adjoint operator $H$ to the problem of constructing
the metric operators $J^N$ for the Neumann Laplacian $-\Delta_N$.
This significantly simplifies the problem,
since $-\Delta_N$ is self-adjoint and its metric operators are bounded,
positive operators with bounded inverse commuting with $-\Delta_N$.
For instance, any bounded, uniformly positive function of~$-\Delta_N$
represents a metric operator.
Moreover, it was shown in \cite{Zelezny-MT}
that any~$J^N$ can be approximated in the strong sense
by a polynomial of $I + \lambda (-\Delta_N-\lambda)^{-1}$,
with $\lambda \in \rho(-\Delta_N)$.

We consider two choices of constants $C_n$ in the following
and we find final formulae for the corresponding metric operators.

\subsection{The constant-coefficients metric}
Let $C_n^2:=1$ for every $n \geq 0$.
Then $J^N=J^D = I$ and the metric operator $\Theta$ reads $\Theta =  I + K$,
where $K$ is an integral operator with the kernel
\begin{equation}\label{metric-constant}
\begin{split}
\mathcal{K}(x,y) & =
\frac{\ii}{a} \, e^{\ii \frac{\alpha}{2}(x-y)}
\sin\left( \frac{\alpha}{2}(x-y) \right) +
 \frac{\ii \alpha}{2a} \, \big( |y-x| -2a  \big)
 \sgn (y-x)
 \\
& \quad + \frac{\alpha^2}{2a} \;\! \left( a^2-xy-a|y-x| \right).
%
%
\end{split}
\end{equation}
Formula~\eqref{metric-constant} represents a remarkably
elegant form for the metric operator
found firstly in \cite{Krejcirik-2006-39, Krejcirik-2008-41a}.

\subsection{The \texorpdfstring{$\mC$}{C} operator}
Another choice of $C_n$ is motivated by the concept of $\mC$ operator,
see Definition~\ref{c.def}.
We want to find such $\Theta$ that $\mC^2=I$, where $\mC=\P\Theta$.
Since $H$ is $\P$-self-adjoint, we have $\P \phi_n=D_n \psi_n$
with some numbers~$D_n$.
Assuming the non-degeneracy condition
$\alpha \neq k_n$ for every $n \geq 0$,
an explicit calculation shows that
\begin{equation}
D_0 = \frac{\sin(2\alpha a)}{2\alpha a}, \ \
D_n= (-1)^n\,\frac{k_n^2-\alpha^2}{k_n^2}, \ n \in \N.
\end{equation}
The condition $(\P\Theta)^2=I$ then restricts~$C_n$
from~\eqref{Theta.def} to
\begin{equation}\label{C-choice}
C_0^2 = \frac{2|\alpha|a}{|\sin(2\alpha a)|}, \ \ C_n^2= \frac{k_n^2}{|k_n^2-\alpha^2|}, \ n \in \N.
\end{equation}
In order to simplify the formulae,
we consider only $\alpha \in (0,k_1)$ in the following.

\begin{remark}
As mentioned below~\eqref{Theta.def}, any choice of~$C_n$
can be interpreted as a sort of normalisation of~$\phi_n$.
It is therefore interesting to notice that~\eqref{C-choice}
results into the symmetric normalization of $\phi_n$ and $\psi_n$
when $\langle \phi_n,\psi_n\rangle=1$ is required:
\begin{align*}
\psi_0(x)
& = \sqrt{\frac{\alpha}{\sin(2\alpha a)}} \,
e^{\ii \alpha a} e^{- \ii \alpha x},
&
\psi_n(x)
& = \frac{k_n}{\sqrt{k_n^2-\alpha^2}}
\left(  \chi_n^N(x) - \ii \frac{\alpha}{k_n} \chi_n^D(x) \right), \\
\phi_0(x)
& = \sqrt{\frac{\alpha}{\sin(2\alpha a)}} \,
e^{\ii \alpha a} e^{  \ii \alpha x},&
\phi_n(x)
& = \frac{k_n}{\sqrt{k_n^2-\alpha^2}}
\left(  \chi_n^N(x) + \ii \frac{\alpha}{k_n} \chi_n^D(x) \right).
\end{align*}
These expressions should be compared with
the normalization of
\eqref{H.beta.0.EF}--\eqref{H.beta.0.EF2},
standardly used in the present paper.
The symmetric form of the ``present normalization''
indicates that the choice~\eqref{C-choice} will lead
to a simpler form of~$\Theta$ than~\eqref{metric-constant}.
\end{remark}

Using~\eqref{C-choice} in the series~\eqref{JND.def},
the operators $J^{\iota}$ can be determined by the functional calculus:
\begin{equation}\label{J.C-choice}
\begin{aligned}
J^N& =\sum_{n=0}^{\infty} \frac{k_n^2}{k_n^2-\alpha^2} \,
\chi_n^N \langle\chi_n^N,\cdot \rangle
+ C_0^2 \, \chi_0^N \langle \chi_0^N,\cdot\rangle
\\
& = (-\Delta_N)(-\Delta_N-\alpha^2)^{-1}
+C_0^2\,\chi_0^N \langle \chi_0^N,\cdot \rangle
\\
& = I+\alpha^2(-\Delta_N-\alpha^2)^{-1}
+C_0^2 \, \chi_0^N \langle \chi_0^N,\cdot \rangle, \\
J^D & =\sum_{n=1}^{\infty} \frac{k_n^2}{k_n^2-\alpha^2} \,
\chi_n^D \langle \chi_n^D,\cdot \rangle
\\
&=(-\Delta_D)(-\Delta_D-\alpha^2)^{-1}
\\
&= I+\alpha^2(-\Delta_D-\alpha^2)^{-1}.
\end{aligned}
\end{equation}
A direct (but very tedious) way how to derive the metric~$\Theta$
for the choice~\eqref{C-choice} is to express
the resolvents of the Dirichlet and Neumann Laplacians
from the ultimate expressions in~\eqref{J.C-choice}
by means of the Green's functions~\eqref{res.DN.ker}
and compose them with the operators $\theta_i$ in~\eqref{Ex.Theta.1}.

However, a more clever way how to proceed
is to come back to the operator form~\eqref{operator.form}
and perform first some algebraic manipulations
with the intermediate expressions appearing in~\eqref{J.C-choice}.
First, we clearly have
$
  J^D (-\Delta_D)^{-1} = (-\Delta_D-\alpha^2)^{-1}
$.
Second, employing~\eqref{pp*.def} and the identity
$
  (-\Delta_N) (-\Delta_N^\perp)^{-1}
  = I - \chi_0^N \langle\chi_0^N,\cdot \rangle
$,
we check
\begin{equation*}
  \left[ J^D p^* (-\Delta_N^\perp)^{-1} \right]^*
  = p \;\! (-\Delta_D-\alpha^2)^{-1}
  ,
  \quad
  \left[ J^N p \;\! (-\Delta_D)^{-1} \right]^*
  = p^* (-\Delta_N-\alpha^2)^{-1}
  .
\end{equation*}
Finally, again using~\eqref{pp*.def},
we verify the intertwining relation
$
  [p \;\! (-\Delta_D-\alpha^2)^{-1}]^*
  = p^* (-\Delta_N-\alpha^2)^{-1}
$.
Summing up, with our choice~\eqref{C-choice},
formula~\eqref{operator.form} simplifies to
\begin{equation}
\begin{aligned}
\Theta =& \ I +  C_0^2 \, \phi_0 \langle \phi_0,\cdot \rangle
+ \alpha^2 (-\Delta_N-\alpha^2)^{-1}
+ \alpha^2 (-\Delta_D-\alpha^2)^{-1}
\\
& + \alpha \;\! p \;\! (-\Delta_D-\alpha^2)^{-1}
+ \alpha \;\! p^* (-\Delta_N-\alpha^2)^{-1}
.
\end{aligned}
\end{equation}
Now it is easy to substitute~\eqref{res.DN.ker}
and after elementary manipulations
to conclude with $\Theta = I + K$,
where $K$ is an integral operator with the kernel
\begin{equation}\label{K.ker.C}
\begin{aligned}
\mathcal{K}(x,y) & = \alpha \, e^{-\ii \alpha (y-x)} \,
\big[ \tan(\alpha a) - \ii \sgn (y-x) \big].
\end{aligned}
\end{equation}
The operator $\mC$ can be found easily by composing $\P$ and $\Theta$.
We finally arrive at the formula $ \mC = \P + L $,
where $L$ is an integral operator with the kernel
\begin{equation}\label{K.ker.CC}
\begin{aligned}
\mathcal{L}(x,y) & = \alpha \, e^{-\ii \alpha (y+x)} \,
\big[ \tan(\alpha a) - \ii \sgn (y+x) \big].
\end{aligned}
\end{equation}

\subsection{The similar self-adjoint operator}
Next we present an example of operator $\Omega$, defined in \eqref{Omega.def}
with $e_n:=\chi_n^N$, that will be used to find the similar self-adjoint
operator~$h$ from~\eqref{sim-intro}.
We recall that the similarity transformation~$\Omega$ is invertible
if all the eigenvalues of $H$ are simple,
which is ensured by the condition
$\alpha \neq k_n$ for every $n\in\N$.
We will actually search for the quadratic form associated to $h$
for which we have the result in Corollary \ref{cor.th}.

We follow the analogous strategy to obtain formula for $\Omega$
as in the proof of Proposition \ref{Ex.Theta.0}.
The definition of $\Omega$ with $e_n:=\chi_n^N$ leads to the sum:
\begin{equation}\label{Ex.Omega.1}
\begin{aligned}
\Omega & = \chi_0^N \langle \phi_0, \cdot \rangle + \sum_{n=1}^{\infty} \chi_n^N \langle \chi_n^N, \cdot \rangle - \ii \alpha \sum_{n=1}^{\infty} \frac{1}{k_n} \chi_n^N \langle \chi_n^D, \cdot \rangle  \\
& = I + \chi_0^N \langle \phi_0, \cdot \rangle - \chi_0^N \langle \chi_0^N, \cdot \rangle + \alpha p \sum_{n=1}^{\infty} \frac{1}{k_n^2} \chi_n^D \langle \chi_n^D, \cdot \rangle \\
& = I + \chi_0^N \langle \phi_0, \cdot \rangle
- \chi_0^N \langle \chi_0^N, \cdot \rangle + \alpha p \;\!  (-\Delta_D)^{-1},
\end{aligned}
\end{equation}
where we have used identities \eqref{p.ND.id}.
In the same manner,
we obtain the result for the inverse~$\Omega^{-1}$:
\begin{equation}\label{Ex.Omega.2}
\begin{aligned}
\Omega^{-1} & = \psi_0 \langle \chi_0^N, \cdot \rangle + \sum_{n=1}^{\infty} \frac{k_n^2}{k_n^2-\alpha^2} \chi_n^N \langle \chi_n^N, \cdot \rangle - \ii \alpha \sum_{n=1}^{\infty} \frac{k_n}{k_n^2-\alpha^2} \chi_n^D \langle \chi_n^N, \cdot \rangle
\\
& = I + \psi_0 \langle \chi_0^N, \cdot \rangle + \alpha^2 (-\Delta_N-\alpha^2)^{-1} - \alpha p^* (-\Delta_N-\alpha^2)^{-1}.
\end{aligned}
\end{equation}
The operators $L$, $M$ appearing in the expressions
for $\Omega=I+L$ and $\Omega^{-1}=I+M$ are, as expected,
integral operators with the kernels $\mathcal{L},$ $\mathcal{M}$
that can be easily obtained using formulae
for the Neumann and Dirichlet resolvents \eqref{res.DN.ker}--\eqref{res.D0}:
\begin{equation}\label{LM.ker}
\begin{aligned}
\mathcal{L}(x,y)  & =  \frac{\ii \alpha}{2a} \,
\big[ y- a \sgn (y-x) \big]
+\frac{1}{2a} \left( e^{ - \ii \alpha (y+a)} -1  \right),  \\
\mathcal{M}(x,y)  & = \frac{\alpha e^{\ii \alpha(a-x)} }{\sin (2\alpha a)}
- \frac{\alpha}{2} \, e^{-\ii \alpha(x-y)}
\big[ \cot (2\alpha a) - \ii \sgn(y-x) \big]  \\
&  \quad - \frac{\alpha e^{-\ii \alpha (x+y)}}{2 \sin(2\alpha a)} .
\end{aligned}
\end{equation}

To find the similar self-adjoint operator~\eqref{sim-intro},
we start from the quadratic form~\eqref{th.psi}.
Inserting \eqref{LM.ker} into the latter and performing several
integrations by parts with noticing
that $LM=-L-M$ and $(M\psi)'=-\ii \alpha M \psi - \ii \alpha \psi$ results in:
\begin{equation}
t_h[\psi]= \|\psi'\|^2 + \alpha^2 | \langle \chi_0^N, \psi \rangle |^2.
\end{equation}
The corresponding operator $h$ reads:
\begin{equation}
\begin{aligned}
h\psi &= -\psi'' + \alpha^2 \chi_0^N \langle \chi_0^N, \psi \rangle, \\
\Dom(h) & = \left\{ \psi \in W^{2,2}(-a,a) : \psi'(\pm a)=0 \right\}.
\end{aligned}
\end{equation}
We remark that $h$ is a rank one perturbation of the Neumann Laplacian.
The eigenfunctions of $h$ are $\chi_n^N$ with $\chi_0^N$ corresponding
to the eigenvalue $\alpha^2$.

It is interesting to compare the spectra of $H$ and $h$ for $\alpha=k_n$,
\ie~in the points where the spectra are degenerate
and similarity transformation breaks down because the operator~$\Omega$
is not invertible. $k_n^2$ is an eigenvalue
with the algebraic multiplicity two for both $H$ and $h$.
However, the geometric multiplicity differs: it is one for $H$ and two for $h$.

The form of~$h$ also explains
the origin of the peculiar $\alpha$-dependence
of the eigenvalues of $H$
(which are all constant except for~$\lambda_0(\alpha)=\alpha^2$).
In fact, it is the nature of the rank one perturbation
to leave all the Neumann eigenvalues untouched
except for the lowest one that is driven to the $\alpha^2$ behavior.

\subsection{More general boundary conditions}
Finally, we consider the general $\PT$-symmetric boundary conditions
$c_{\pm}:=\ii \alpha \pm \beta$, with $\alpha,\beta \in \R$.
We start with formal considerations.
The $\Theta$-self-adjointness of $H$ can be
expressed in the following way.
We take the advantage of the realization of $\Theta=I + K$,
which we insert into $\Theta H \psi = H^* \Theta \psi$, $\psi \in \Dom(H)$.
A formal interchange of differentiation with integration and integration by parts yield following problem that we can understand in distributional sense:
\begin{align}
(\partial_x^2 - \partial_y^2) \mathcal{K}(x,y) &=0, \label{K.wave.eq}\\
\partial_y \mathcal{K}(x, \pm a) + (\ii \alpha \pm \beta) \mathcal{K}(x, \pm a) & = 0. \label{K.BC.1}
\end{align}
Moreover, $\Theta \psi$ must belong to $\Dom(H^*)$, from which we have a condition
\begin{equation}\label{K.BC.2}
\partial_x \mathcal{K} (\pm a, y) + (-\ii \alpha \pm \beta)
\mathcal{K}(\pm a, y) = 2\ii \alpha \delta (y\mp a).
\end{equation}
Here~$\delta$ denotes the Dirac delta function.

Already presented examples of $\Theta$ for $\beta=0$
satisfy these requirements, particularly $\mathcal{K}$
solves the wave equation \eqref{K.wave.eq}.
The kernel \eqref{K.ker.C}, corresponding to the simpler form
of presented metric operators, is a function of $x-y$ only.
Inspired by this, we find the solution of the wave equation
\begin{equation}\label{K.beta}
\mathcal{K}(x,y)=e^{\ii \alpha (x-y) - \beta |x-y| } \,
\big[c+\ii \alpha  \sgn(x-y) \big], \ \ c \in \R,
\end{equation}
that satisfies the ``boundary conditions'' \eqref{K.BC.1}
and \eqref{K.BC.2} as well.
The one parametric family of solutions
\eqref{K.beta} of \eqref{K.wave.eq}--\eqref{K.BC.2}
demonstrates the known non-uniqueness of solutions to this problem.
We also remark that~$c$ can be taken as $\alpha$ or $a$ dependent as well.

The positivity of $\Theta$ is ensured if the norm of $K$ is smaller than 1.
This can be estimated by the Hilbert-Schmidt norm of $K$
which is explicitly computable:
\begin{equation}
\|K\|_{\rm HS}^2 = (c^2+\alpha^2) \,
\frac{4 a \beta + e^{-4 a \beta} -1 }{2 \beta^2}
\,.
\end{equation}
Consequently, the positivity of $\Theta$
can be achieved by several ways,
\eg, if~$a$ is small;
or if $\beta$ is positive and large;
or $|c|$ and $|\alpha|$ are small.
In any of the regimes,
the formal manipulations above are justified.
%

\section{Bounded perturbations}
\label{sec:bound.pert}
%
\label{bound.pert}

In this section we show that results of Section \ref{sec.metric} remain valid
if we consider a bounded perturbation $V$ of $H$.

Firstly we remark that the perturbation result \cite[Thm. XIX 2.7]{DS3}
guarantees that $H+V$ remains a discrete spectral operator.
That is, if all the eigenvalues of $H+V$ are simple,
then the metric operator $\Theta$ exists.
We show that the claim of Theorem \ref{Omega.real} is valid for $H+V$ as well.
The rest of the results from Section \ref{sec.metric} then follows straightforwardly.

Our approach is to use analytic perturbation theory for the operator
$h:=\Omega H \Omega^{-1}$ that is perturbed
by a bounded operator $\Omega V \Omega^{-1}$.
We denote by $\xi_n$, $\eta_n$ the eigenfunctions
of $H+V$ and $H^*+V^*$, respectively.
Let $e_n$ be elements of any orthonormal basis in~$\H$.
\begin{theorem}\label{Thm.bounded}
Let all the eigenvalues of $H$ be simple and let $V$ be a bounded operator.
If all eigenvalues of $H+V$ are simple,
then $\Omega_V = \sum_{n=0}^{\infty} e_n \langle \eta_n, \cdot \rangle$, \ie~$\Omega_V: \xi_n \mapsto e_n$,
can be expressed as
\begin{equation}
\Omega_V = U + L,
\end{equation}
where $U$ is a unitary operator and $L$ is a Hilbert-Schmidt operator.
\end{theorem}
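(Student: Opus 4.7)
The plan is to exploit Theorem~\ref{Omega.real} via the difference $\Omega_V - \Omega$. Writing
\begin{equation*}
\Omega_V - \Omega = \sum_{n=0}^\infty e_n \langle \eta_n - \phi_n, \cdot\rangle,
\end{equation*}
one has, for orthonormal $\{e_n\}$, $\|\Omega_V - \Omega\|_{\rm HS}^2 = \sum_n \|\eta_n - \phi_n\|^2$. By Theorem~\ref{Omega.real} we already have $\Omega = U + L_0$ with $U$ unitary and $L_0$ Hilbert-Schmidt; hence it suffices to show $\sum_n \|\eta_n - \phi_n\|^2 < \infty$, for then $\Omega_V = U + (L_0 + \Omega_V - \Omega)$ is the required decomposition.

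The needed eigenvector estimate is $\|\eta_n - \phi_n\| = O(1/n)$. By the asymptotics~\eqref{ln.as}, the eigenvalues $\overline{\lambda_n}$ of $H^*$ satisfy $\overline{\lambda_n} = k_n^2 + O(1)$, so the half-gap $g_n := \tfrac{1}{2}\,\mathrm{dist}(\overline{\lambda_n}, \sigma(H^*)\setminus\{\overline{\lambda_n}\})$ is of order $n$ for large~$n$. Since $H^*$ is similar to a normal operator via the bounded invertible $(\Omega^*)^{-1}$ (Proposition~\ref{H.spectral}), one has $\|(z - H^*)^{-1}\| \leq C/g_n$ on the circle $\Gamma_n$ of radius $g_n$ around $\overline{\lambda_n}$. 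For $n$ large enough, $\|V^*\| < g_n/(2C)$; the Neumann expansion of $(z - H^* - V^*)^{-1}$ converges on $\Gamma_n$, and the second resolvent identity together with the Riesz projection formula $P_n = (2\pi\ii)^{-1}\oint_{\Gamma_n}(z - H^*)^{-1}\,\dd z$ yield $\|P_n^V - P_n\| = O(\|V^*\|/g_n) = O(1/n)$. Because both projectors are rank-one, the biorthonormal normalizations $\langle\psi_n, \phi_n\rangle = 1$ and $\langle\xi_n, \eta_n\rangle = 1$ give $\eta_n = P_n^V \phi_n / \langle \xi_n, \phi_n\rangle$ for large~$n$, from which $\|\eta_n - \phi_n\| = O(1/n)$ follows.

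Consequently $\sum_n \|\eta_n - \phi_n\|^2 < \infty$, modulo a finite number of small-$n$ terms that contribute only a finite-rank (hence Hilbert-Schmidt) correction. The main technical obstacle is the last step above: passing from the projector estimate to the eigenvector estimate requires $\langle\xi_n, \phi_n\rangle$ to remain bounded away from zero, which in turn requires the mirror estimate $\|\xi_n - \psi_n\| = O(1/n)$ obtained by applying the same perturbative argument to $H$ and $V$ in place of $H^*$ and $V^*$. A conceptually cleaner alternative, hinted at in the text preceding the statement, is to perform the perturbation analysis at the level of the normal operator $h = \Omega H \Omega^{-1}$ perturbed by the bounded $\Omega V \Omega^{-1}$; there the spectral theorem removes normalization ambiguities, and the estimate is transferred back through the bounded $\Omega$.
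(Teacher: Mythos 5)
Your strategy is sound and reaches the same conclusion, but it is packaged differently from the paper's proof. The paper first conjugates: it sets $h:=\Omega H\Omega^{-1}$, $v:=\Omega V\Omega^{-1}$, builds the holomorphic family $h(\varepsilon)=h+\varepsilon v$, and expands the eigenvectors of $h(\varepsilon)^*$ in a full Rayleigh--Schr\"odinger series around the \emph{normal} operator $h(0)$, estimating every order by $\|\tilde{\eta}_n^{(j)}\|\leq c^j/n^j$ via contour integrals over circles of radius $n/2$; summing the series shows the correction operator $\tilde{L}$ is Hilbert--Schmidt, and $\Omega_V=\tilde{\Omega}\,\Omega$ finishes by the ideal property. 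You instead compare $H^*$ and $H^*+V^*$ directly through the difference of Riesz projections, using the second resolvent identity and the linear growth of the spectral gaps to get $\|P_n^V-P_n\|=O(1/n)$, and then convert to $\|\eta_n-\phi_n\|=O(1/n)$ so that $\Omega_V-\Omega$ is itself Hilbert--Schmidt on top of Theorem~\ref{Omega.real}. Both arguments rest on the same mechanism (eigenvalue separation $\sim n$ plus a uniform resolvent bound $\|(z-H^*)^{-1}\|\leq C/\mathrm{dist}(z,\sigma(H^*))$, which for you requires invoking the similarity of $H^*$ to a normal operator from Proposition~\ref{H.spectral}(v)). What your route buys is brevity and no need to transport the problem through $\Omega$; what it costs is exactly the normalization bookkeeping you flag: passing from $\|P_n^V-P_n\|=O(1/n)$ to $\|\eta_n-\phi_n\|=O(1/n)$ needs $\langle\xi_n,\phi_n\rangle=1+O(1/n)$, hence the mirror estimate $\|\xi_n-\psi_n\|=O(1/n)$ with a consistent choice of scaling (e.g.\ $\xi_n:=\tilde{Q}_n\psi_n$), together with the uniform bounds on $\|\phi_n\|$, $\|\psi_n\|$ and a matching of the enumerations inside each contour. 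The paper's conjugation trick is designed precisely to sidestep this: for the normal $h(0)$ the unperturbed eigenvectors and their biorthogonal partners coincide with the orthonormal basis $\chi_n^N$, so no normalization ambiguity arises. Your closing remark correctly identifies this alternative as the cleaner one --- it is in fact the proof the paper gives. To make your version complete you should carry out the normalization step explicitly rather than leaving it as an acknowledged obstacle; as sketched it is fixable, so I would call this a correct proof up to a routine but non-optional lemma.
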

\begin{proof}
As in the proof of Theorem \ref{Omega.real}, without loss of generality,
we restrict ourselves to $e_n:=\chi_n^N$
and we show that $\Omega_V=I+L$ with $L$ being Hilbert-Schmidt.
We consider the normal operator $h:=\Omega H \Omega^{-1}$
and we perturb it by $v:= \Omega V \Omega^{-1}$.
More specifically,
we construct $h(\varepsilon):=h + \varepsilon \, v$
forming a holomorphic family of type~$(A)$
with respect to the parameter $\varepsilon$.
We denote by $\mu_n(\varepsilon),$ $\overline{\mu_n(\varepsilon)}$ the eigenvalues and by $\tilde{\xi}_n (\varepsilon)$, $\tilde{\eta}_n(\varepsilon)$ the corresponding eigenfunctions of $h(\varepsilon)$ and of $h(\varepsilon)^*$ respectively. $h(0)$, $h(0)^*$ are normal, therefore the eigenfunctions $\tilde{\xi}_n (0)$ and $\tilde{\eta}_n(0)$ form orthonormal bases. In fact, with our choice of $e_n$, $\tilde{\xi}_n (0) = \tilde{\eta}_n(0) = \chi_n^N$.

We construct operator $\tilde{\Omega}: \tilde{\xi}_n(1) \mapsto \chi_n^N$ and we will show that $\tilde{\Omega}=I+\tilde{L}$, where $\tilde{L}$ is Hilbert-Schmidt. $\Omega_V$ is the composition of $\Omega$ and $\tilde{\Omega}$ and the claim then follows easily using of the fact that Hilbert-Schmidt operators are a *-both-sided ideal.

The distance of $\mu_n(0)$ and $\mu_n(1)$ can be at most $\|v\|$. Since we know the asymptotics of $\mu_n(0)=\lambda_n$, see \eqref{ln.as}, it is clear that there exists $N_0$ such that for all $n > N_0$,  $|\mu_{n+1}(1) - \mu_{n}(1)| > n$ holds. Moreover, for such $n$ the radius of convergence of perturbation series for eigenvalues and eigenfunctions is larger than 1. Thus, we have
\begin{equation}\label{eta.nj}
\tilde{\eta}_n(\varepsilon) = \chi_n^N + \sum_{j=1}^{\infty} \tilde{\eta}_n^{(j)} \varepsilon^j.
\end{equation}
We estimate the norms of $\tilde{\eta}_n^{(j)}$ using the analytic perturbation theory:
\begin{equation}\label{eta.nj.est}
\begin{aligned}
\|\tilde{\eta}_n^{(j)}\| & \leq \frac{1}{2\pi} \oint_{\Gamma_n} \left\| (h(0)^*-E)^{-1}( v^* (h(0)^*-E)^{-1})^{j}  \chi_n^N \right\| \dd E  \\
& \leq \frac{1}{2\pi} \oint_{\Gamma_n} \frac{ 2^{j+1} \| v \|^{j}}{n^{j+1}} \dd E \leq  \frac{c^{j}}{n^{j}},
\end{aligned}
\end{equation}
where $\Gamma_n$ is a circle around $\mu_n(0)$ of radius $n/2$ and the constant $c$ does not depend on $n$. We define $N_1$ as such that $N_1 \geq N_0$ and $c/N_1 < 1$.

We prove that $\tilde{\Omega}$ has the desired form by showing that the adjoint $\tilde{\Omega}^*=\sum_{n=0}^{\infty} \tilde{\eta}_n(1) \langle \chi_n^N , \cdot \rangle$ can be written as $\tilde{\Omega}^* = I + \tilde{L}^*_{N_1} + \tilde{L}^*_{\infty}$, where
\begin{equation}
\tilde{L}^*_{N_1}  := \sum_{n=0}^{N_1-1} (\tilde{\eta}_n(1) - \chi_n^N) \langle \chi_n^N, \cdot \rangle, \ \
\tilde{L}^*_{\infty}  := \sum_{n=N_1}^{\infty} \sum_{j=1}^{\infty} \tilde{\eta}_n^{(j)} \langle \chi_n^N, \cdot \rangle,
\end{equation}
and $\tilde{L}^*_{N_1}$ and $\tilde{L}^*_{\infty}$ are Hilbert-Schmidt.
The decomposition of $\tilde{\Omega}^*$ follows immediately if we consider the expansions \eqref{eta.nj} for $n>N_1$ and rewrite $\tilde{\eta}_n(1)=\chi_n^N + (\tilde{\eta}_n(1)-\chi_n^N)$ for $n \leq N_1$.  $\tilde{L}^*_{N_1}$ is a finite rank operator therefore it is obviously Hilbert-Schmidt. $\tilde{L}^*_{\infty}$ is bounded and the defining sum is absolutely convergent since
\begin{equation}
\begin{aligned}
\sum_{n=N_1}^{\infty} \sum_{j=2}^{\infty} \|\tilde{\eta}_n^{(j)} \| | \langle \chi_n^N, \psi \rangle |
\leq \|\psi \| \sum_{n=N_1}^{\infty} \sum_{j=2}^{\infty} \left( \frac{c}{n} \right)^j
\leq \|\psi \| \sum_{n=N_1}^{\infty} \frac{c^2}{n^2 - nc} ,
\\
\sum_{n=N_1}^{\infty} \|\tilde{\eta}_n^{(1)} \| | \langle \chi_n^N, \psi \rangle |
\leq c \sqrt{\sum_{n=N_1}^{\infty} \frac{1}{n^2}}
\sqrt{\sum_{n=N_1}^{\infty} | \langle \chi_n^N, \psi \rangle |^2}
\leq c \|\psi\| \sqrt{\sum_{n=N_1}^{\infty} \frac{1}{n^2}}.
\end{aligned}
\end{equation}
Finally we estimate the Hilbert-Schmidt norm of $\tilde{L}^*_{\infty}$:
\begin{equation}
\begin{aligned}
&\sum_{p=0}^{\infty}
\left \langle
\sum_{m=N_1}^{\infty} \sum_{i=1}^{\infty} \tilde{\eta}_m^{(i)} \langle \chi_m^N, \chi_p^N \rangle,
\sum_{n=N_1}^{\infty} \sum_{j=1}^{\infty} \tilde{\eta}_n^{(j)} \langle \chi_n^N, \chi_p^N \rangle
\right \rangle \\
&\leq
\sum_{p=N_1}^{\infty} \sum_{i=1}^{\infty} \left( \frac{c}{p} \right)^{i} \sum_{j=1}^{\infty} \left( \frac{c}{p} \right)^{j}
\leq \sum_{p=N_1}^{\infty} \left( \frac{c}{p-c} \right)^2 < \infty.
\end{aligned}
\end{equation}
This concludes the proof of the theorem.
\end{proof}
\begin{remark}[General Sturm-Liouville operators]
Let us conclude this section by a remark on how to extend
the previous result on bounded perturbations~$V$
for the operator~$H$ in the general form
$$
  H\psi := -(\rho\psi')' + V\psi
  \qquad \mbox{on} \qquad
  L^2(-a,a)
  \,,
$$
subject to the boundary conditions
\begin{equation}\label{bc-general}
\rho(\pm a) \psi'(\pm a) + c_{\pm} \psi(\pm a) = 0.
\end{equation}
Assuming merely that~$\rho$ is a bounded and uniformly positive function,
\ie, there exists a positive constant~$C$ such that
$C^{-1}\leq\rho(x) \leq C$ for all $x \in (-a,a)$,
the operator can be defined (\cf~\cite[Corol.~4.4.3]{Davies1995})
as an m-sectorial operator
associated with a closed sectorial form with domain $W^{1,2}(-a,a)$.
If, in addition, we assume that $\rho\in W^{1,\infty}(-a,a)$,
then it is possible to check that the domain of~$H$
consists of functions~$\psi$ from the Sobolev space $W^{2,2}(-a,a)$
satisfying~\eqref{bc-general}.

Now, let us strengthen the regularity
hypothesis to $\rho\in W^{2,\infty}(-a,a)$
and introduce the unitary (Liouville) transformation
$\mathcal{U} : L^2(-a,a) \to L^2(f(-a),f(a))$ by
$$
  \mathcal{U}^{-1}\phi := \rho^{-1/4} \, \phi \circ f
  \,,
  \qquad \mbox{where} \qquad
  f(x) := \int_0^x \frac{d\xi}{\sqrt{\rho(\xi)}}
  \,.
$$
Then it is straightforward to check that
the unitarily equivalent operator $\tilde{H}:=\mathcal{U}H\mathcal{U}^{-1}$
on $L^2(f(-a),f(a))$ satisfies
\begin{equation*}
\begin{aligned}
\tilde{H} \phi & = -\phi'' + \tilde{V}\phi + W\phi, \\
\Dom(\tilde{H}) & = \big\{ \phi \in W^{2,2}\big(f(-a),f(a)\big): \
\phi'(\pm f(a)) + \tilde{c}_\pm \phi(\pm f(a))=0 \big\},
\end{aligned}
\end{equation*}
where $\tilde{V} := \mathcal{U}V\mathcal{U}^{-1}$ and
$$
\begin{aligned}
  \tilde{c}_\pm := \frac{c_\pm}{\rho(\pm a)^{1/4}}
  - \frac{1}{4} \frac{\rho'(\pm a)}{\rho(\pm a)^{1/2}}
  \,, \qquad
  W := \left(\frac{1}{4} \rho''
  - \frac{1}{16}
  \frac{\rho'^2}{\rho}
  \right)
  \circ f^{-1}
  \,.
\end{aligned}
$$
In this way, we have transformed the second-order perturbation
represented by~$\rho$ into a bounded potential~$W$
and modified boundary conditions.
Theorem~\ref{Thm.bounded} applies to~$\tilde{H}$
and, as a consequence of the unitary transform~$\mathcal{U}$,
to~$H$ as well.
\end{remark}

\section{Conclusions}\label{Sec.end}

In this article, we investigated the structure of
similarity transformations~$\Omega$ and metric operators~$\Theta$
for Sturm-Liouville operators with separated,
Robin-type boundary conditions.
The main result is that~$\Omega$ and~$\Theta$
can be expressed as a sum of the identity and an integral Hilbert-Schmidt operator.

We would like to emphasize that this not always the case
for other types of operators,
see, \eg, \cite{Albeverio-2005-38, Siegl-2008-41, Kuzhel-2011-379, Gunther-2010-43},
where~$\Theta$ is a sum of the identity and a bounded non-compact operator.
The latter is a composition of the parity and the multiplication
by~${\rm sign}$ function.
Moreover, corresponding similarity transformations map (non-self-adjoint)
point interactions to (self-adjoint) point interactions,
which is not typically the case for operators studied here.
This is illustrated in the example of $\PT$-symmetric boundary
conditions where the equivalent self-adjoint operator
is not a point interaction but rather a rank one perturbation
of the Neumann Laplacian.

In this work we considered the separated boundary conditions only.
Nonetheless, the analogous results are expected to be valid
for all strongly regular boundary conditions.

As the proofs of the results show, the crucial property is the asymptotics
of eigenvalues, \ie~separation distance of eigenvalues tends to infinity,
that is used for the proof of the existence of similarity transformations~\cite{DS3}.
Recent results on basis properties for perturbations of harmonic oscillator
type operators \cite{Adduci-2009,Shkalikov-2010-269,Albeverio-2009-64}
give a possibility to investigate the structure of similarity transformation
in these cases as well. Another step is to extend the results \eg~on Hill operators,
where a criterion on being spectral operator of scalar type has been obtained in \cite{Gesztesy-2009-107} and recently extended in \cite{Djakov-2011}.

On the other hand,
the structure of similarity transformations for operators with continuous spectrum
as well as for multidimensional Schr\"odinger operators is almost unexplored
and constitutes thus a challenging open problem.

We illustrated the results by an example of $\PT$-symmetric boundary conditions,
where we found all the studied objects in a closed formula form,
which is hardly the case in more general situations.
However, in general, we may search for approximations of $\Omega$ or $\Theta$,
typically applying the analytic perturbation theory to find perturbation
series for eigenvalues and eigenfunctions of $H$ to certain order $k$.
For instance, we perturb the parameters $c_{\pm}$ in boundary conditions
by small $\varepsilon$.
As a result we find an approximation $h_{\rm app}$ of the similar
operator $h$ with resolvents satisfying
$\|(h - z)^{-1} - (h_{\rm app} - z)^{-1} \| \leq C \varepsilon^k$.
 An extensive discussion and example of such construction
can be found in \cite{Zelezny-MT}.
The same remark is appropriate for small perturbations
by bounded operator discussed
in Section~\ref{bound.pert}.

\section*{Acknowledgement}
D.K.\ acknowledges the hospitality of
the Deusto Public Library in Bilbao.
This work has been partially supported by
the Czech Ministry of Education, Youth, and Sports
within the project LC06002 and by the GACR grant No. P203/11/0701.
P.S.\ appreciates the support by
GACR grant No. 202/08/H072 and by the Grant Agency of the Czech Technical University in Prague,
grant No. SGS OHK4-010/10.
J.\v{Z}.\ appreciates the support by
the Czech Ministry of Education, Youth, and Sports
within the project LC527.

{\small

}

\end{document}